\documentclass{article}
\usepackage{amssymb}
\usepackage{amsfonts}
\usepackage{amsmath}

\setcounter{MaxMatrixCols}{10}

\newtheorem{theorem}{Theorem}

\newtheorem{corollary}[theorem]{Corollary}

\newtheorem{definition}[theorem]{Definition}

\newtheorem{lemma}[theorem]{Lemma}

\newenvironment{proof}[1][Proof]{\noindent\textbf{#1.} }{\ \rule{0.5em}{0.5em}}

\begin{document}

\title{CM Method and Expansion of Numbers}
\author{A Abdurrahman\thanks{%
Ababdu@ship.edu} \\
Department of Physics\\
Shippensburg University of Pennsylvania\\
1871 Old Main Drive\\
Shippensburg, PA 17257\\
USA}
\maketitle

\begin{abstract}
We show that an iterative method for computing the center of mass (CM) of $q$
units of mass, placed on a unit interval $\left[ 0,1\right] $ along the $x$%
-axis, give rise to a simple procedure for expanding rational numbers less
than unity in powers of \ $r/s<1$, with $r,s$, integers larger than $0$. The
method is then extended to all numbers, real or complex, though the
procedure for none rational numbers is more time consuming. We also show how
our method provides a natural way to generalize Jacobsthal numbers.
Moreover, the method provides a way to generate infinitely many sequences of
numbers, of which many play an important rule in mathematical sciences and
engineering, to name few, Jacobsthal sequence, Fibonacci sequence, and Pell
sequence.
\end{abstract}

\section{A general outline of the method}

In this paper we are going to give a method based on the concept of center
of mass in solid mechanics which will allow us to expand any rational number 
$\left( p/q\right) $ with $q>p>0$, as an infinite series in powers of $%
\left( 1/2\right) $. The method can also be used with some modification to
express any rational number $\left( p/q\right) $ with $q>p>0$ as an infinite
series in powers of $\left( r/s\right) $ with $r,s$ (integers) and $s>r>0$ .
The method basically involves dividing $q$ units of mass placed on the unit
interval $\left[ 0,1\right] $ of the real $x$-axis, into two groups $p$ and $%
q-p$, with $q-p>0$\footnote{%
In what follows both the mass and distance are considered to be
dimensionless. Moreover the masses are considered to be point like. The
assumption that $q-p>0$ is an "integer" may be relaxed as we shall see later.%
}. We start by placing $q-p$ units of mass at the origin $x=0$ and $p$ units
of mass at $x=1$. We call this arrangement the initial configuration or
state. Without lose of generality, we take the initial position of the
larger collection, that is, of $q-p$ to be the zero approximation of the
center of mass of the given arrangement, which shall call $x_{cm}^{\left(
p/q\right) }\left( 0\right) $\footnote{%
The initial choice for $x_{cm}\left( 0\right) $ is immaterial, any point on
the $x$-axis will do, for any arbitrary choice of $x_{cm}\left( 0\right) $,
the method converges to the same limit. The reader should note that calling $%
x_{cm}\left( 0\right) $ the zero approximation may not correspond to the
precise definition of approximation in the literature.}. The procedure is
then to consider the $p$ units of mass and an equal number $p^{\prime }$
units of mass from the larger group $q-p$ and compute their center of mass%
\footnote{%
In fact $p^{\prime }$ need not equal $p$ as we shall see later. But this
choice is the simplest and it serves to illustrates the method with out loss
of generality.}. By symmetry their center of mass is the midpoint of our
interval, $x=1/2$. This provides the first correction to $x_{cm}^{\left(
p/q\right) }\left( 0\right) $ in our method and we have $x_{cm}^{\left(
p/q\right) }\left( 1\right) =x_{cm}^{\left( p/q\right) }\left( 0\right)
+1/2=0+1/2$, where now $x_{cm}^{\left( p/q\right) }\left( n\right) $ is the $%
n$th approximation of the center of mass of the $q$ units of mass in their
initial configuration. Next we place the $p+p^{\prime }$ units of mass at $%
x_{cm}^{\left( p/q\right) }\left( 1\right) $, i.e., $x=1/2$, and leave the
reaming units of mass $q-\left( p+p^{\prime }\right) $ at $x=0$. Now we
repeat the same procedure on the new interval along the $x$-axis, $\left[
0,1/2\right] $, i.e., the interval where the new arrangement of the $q$
units of mass now reside. To obtain the second order approximation of the
center of mass for the $q$ units of mass compute the center of mass for $%
p+p^{\prime }$ and an equal amount of units of mass taken from $%
q-p-p^{\prime }$ , which is simply $\left( 1/2\right) \left( 1/2\right) $.
This value is the second order correction to the center of mass. Observe in
this case the center of mass shifts to the left while for the first order
correction shifts to the right. Hence, $x_{cm}^{\left( p/q\right) }\left(
2\right) =x_{cm}^{\left( p/q\right) }\left( 1\right) -\left( 1/2\right)
\left( 1/2\right) =0+\left( 1/2\right) -\left( 1/2\right) \left( 1/2\right) $%
. Placing the $2p^{\prime }+2p$ units of mass at $x_{cm}^{\left( p/q\right)
}\left( 2\right) $ and repeating the same steps indefinitely\footnote{%
When $p$ becomes larger than $q-p$, the procedure is reversed. That is we
are always taking the smaller group of masses and combing it with an equal
number from the larger group. For $q=even$, sometimes this process
terminates and produces a finite number of terms. For example for $p=1,q=4$,
the expansion of $1/4$ in powers of $\left( 1/2\right) $ terminates at the
second iteration. However, the expansion of $\left( 1/4\right) $ in powers
of $\left( 1/3\right) $ gives an infinite series in powers of $\left(
1/3\right) $.}, we see that the center of mass converges to a limit point%
\footnote{%
We shall call the the initial choice of the ratio of $p$ to $p^{\prime }$ as
the iterative weight and we simply denote it by $w=p:p^{\prime }$. In the
present case, $w=1:1$. We shall see later that other choices of $w$ lead to
expansions in powers different from $1/2$.}. Thus with suitable choice of
the integers $q$ and $p$, this iterative method results in an infinite
series that can be identified with any rational number less than unity whose
value is the limit of $x_{cm}^{\left( p/q\right) }\left( n\right) $ as $n$
(the number of iterations) tends to infinity. In the following sections, we
will illustrate our method through some concrete examples, relax the
assumption that $p/q$ is a rational number smaller than $unity$ and extend
the method to all numbers real or complex.

\section{Expansion of rational numbers smaller than unity}

To make notation more precise, let $X^{\left( p/q,r/s\right)
}=x_{cm}^{\left( p/q,r/s\right) }$, where $q-p>0$ units of mass are placed
at the origin, $p$ units of mass are placed at one unit length from the
origin and center of mass is expanded in powers of $\left( r/s\right) $. We
now consider the expansion of rational numbers $\left( p/q\right) $ with $%
q>p>0$. In fact it suffice to consider only the rational number $\left(
1/q\right) $ since $p/q=p\left( 1/q\right) $ and $p$ can always be absorbed
in the coefficients of the expansion. Let us consider a system of $q$ units
of point-like masses, with $q=2,3,4,5,...\,$. Let $p$ units of mass be
located at one unit of distance away from the origin along the positive real
axis, and let the remaining $\left( q-p\right) $ units of mass be located at
the origin. \ It should be noted here that all quantities are assumed to be
dimensionless.

To illustrate our method we could just focus on a specific rational number
and expand it in powers\footnote{%
One can expand in powers of $r/s=1/3$ or any other rational number less than
one.} of $r/s=1/2$. The simplest nontrivial case is $p/q=1/3$. With the
choice $p=1$ and $q=3$, the center of mass $\left( x_{cm}^{\left( 1/3\right)
}\right) $ for this combination follows at once by symmetry or from the
simple formula, for point masses located on the $x$-axis, derived in
mechanics\footnote{%
See any book on solid mechanics.} 
\begin{equation}
x_{cm}=\frac{m_{1}x_{1}+m_{1}x_{2}+\cdot \cdot \cdot }{m_{1}+m_{1}+\cdot
\cdot }  \label{eqnhscoordn}
\end{equation}%
where $x_{n}$ is the position (or distance) of the $nth$ mass from the
origin. Thus for the above example 
\begin{equation}
x_{cm}=\frac{\left( q-p\right) \left( 0\right) +p\left( 1\right) }{\left(
q-p\right) +p}=\frac{p}{q}=\frac{1}{3}
\end{equation}%
So in this case $x_{cm}$ is the value of the fraction $\left( p/q\right) $
which we want to expand in powers of $\left( 1/2\right) $. Henceforth, we
shall denote $x_{cm}$ by $X^{\left( p/q,r/s\right) }$ to make the notation
self explanatory. To carry out the expansion, locate two units of mass, $%
q-p=2$, at the origin and one unit of mass, \thinspace $p=1$, at a unit
distance from the origin.\ We take the zero approximation for the center of
mass, $X_{0}^{\left( 1/3,1/2\right) }$, to be the location of the larger of
the two numbers $p$ and $\left( q-p\right) $, which in this case $x=0$. Now
we consider the system made up of only one unit mass from each side. Clearly
this system of the two units of mass has a center of mass at $1/2$ unit from
the origin (this value is the first order correction to the zero
approximation). To obtain the first approximation $X_{1}^{\left(
1/3,1/2\right) }$ of the center of mass we add this value to $X_{0}^{\left(
1/3,1/2\right) }$. \ Hence, $X_{1}^{\left( 1/3,1/2\right) }=X_{0}^{\left(
1/3,1/2\right) }+1/2=0+1/2$. Next we place the two masses we selected at $%
X_{1}$ and leave the remaining $q-p=1$ unit of mass in its place at the
origin. To get the second approximation we consider one unit of mass from
each side of the new configuration. Clearly the center of mass of the two
unit masses lies in the middle point between $X_{0}^{\left( 1/3,1/2\right)
}=0$ and $X_{1}^{\left( 1/3,1/2\right) }=1/2$, that is at $x=\left(
1/2\right) \left( 1/2\right) $. Thus the second order correction is $-\left(
1/2\right) ^{2}$, the minus sign is needed because the center of mass now
shifts to the left\footnote{%
It is may be advantages to introduce an iterative shift function defined by $%
S\left( n\right) =Q$, where $Q=-1$ if the $n$th iteration shifts the center
of mass to the left and $Q=+1$ if the $n$th iteration shifts the center of
mass to the right. The reader should note that $S\left( n\right) $ keeps
track of the sign inserted at each link in the infinite chain. Here this
function is of no real significance but when the analogy between our system
of $q$ units of mass, our method and a system of $q$ particles and quantum
theory is explored, this function exhibits a property similar to that of
parity operator in quantum mechanics.}. The second approximation now reads $%
X_{2}^{\left( 1/3,1/2\right) }=X_{1}^{\left( 1/3,1/2\right) }-\left(
1/2\right) \left( 1/2\right) =0+\left( 1/2\right) -\left( 1/2\right) \left(
1/2\right) $. Placing these two units of mass at $X_{2}^{\left( 1/3\right) }$
and repeating the procedure $n$ times, we get%
\begin{eqnarray}
X_{n}^{\left( 1/3,1/2\right) } &=&0+\left( \frac{1}{2}\right) -\left( \frac{1%
}{2}\right) ^{2}+\left( \frac{1}{2}\right) ^{3}-\left( \frac{1}{2}\right)
^{4}+\cdot \cdot \cdot +\left( -1\right) ^{n+1}\left( \frac{1}{2}\right) ^{n}
\notag \\
&=&\allowbreak \frac{1}{3}-\frac{1}{3}\left( -\frac{1}{2}\right) ^{n}
\label{example1/3}
\end{eqnarray}%
and in the limit of $n\rightarrow \infty $, we have%
\begin{equation}
X^{\left( 1/3,1/2\right) }=\lim_{n\rightarrow \infty }X_{n}^{\left(
1/3\right) }=\frac{1}{3}  \label{eqnex1/3se1/2}
\end{equation}%
Thus our method yields the desired expansion of the fraction $1/3$ in powers
of $\left( 1/2\right) $%
\begin{equation}
\frac{1}{3}=\sum_{k=1}^{\infty }\left( -1\right) ^{k+1}\left( \frac{1}{2}%
\right) ^{k}  \notag
\end{equation}%
The $n$th partial sum $X_{n}^{\left( 1/3,1/2\right) }$ in (\ref{example1/3})
for the expansion of $1/3$ in powers of $\left( 1/2\right) $ is thus 
\begin{equation}
X_{n}^{\left( 1/3,1/2\right) }=\frac{1}{3}-\frac{1}{3}\left( \left( -\frac{1%
}{2}\right) \right) ^{n}=\frac{1}{2^{n}}\frac{2^{n}-\left( -1\right) ^{n}}{3}%
\text{, \ \ \ \ \ \ \ \ }n\geq 0  \label{eqnsum1/3}
\end{equation}%
At this point the reader may recognize that the sequence defined by $%
2^{n}X_{n}^{\left( 1/3,1/2\right) }$%
\begin{equation}
2^{n}X_{n}^{\left( 1/3,1/2\right) }=\frac{2^{n}-\left( -1\right) ^{n}}{3}=%
\frac{2^{n}-\left( -1\right) ^{n}}{2+1}  \label{eqjacobnumbdef}
\end{equation}%
is the well known Jacobsthal sequence (Sloane \textbf{A001045)} whose
members are the Jacobsthal numbers ($J_{n}$)%
\begin{equation}
0,1,1,3,5,11,21,43,85,171,341,683,.....  \notag
\end{equation}%
Thus our $n$-th order approximation of the center of mass $X_{n}^{\left(
1/3,1/2\right) }$ (or the partial sums in the expansion of $1/3$ in powers
of $1/2$) are related to the Jacobsthal numbers $J_{n}$ through the relation 
\begin{equation}
J_{n}=2^{n}X_{n}^{\left( 1/3,1/2\right) }  \label{eqrebtjandus}
\end{equation}%
This also can be seen by making the substitution $X_{n}^{\left(
1/3,1/2\right) }=J_{n}/2^{n}$ in equations (\ref{eqnsum1/3}) to obtain 
\begin{equation}
J_{n}=\frac{2^{n}-\left( -1\right) ^{n}}{3}=\frac{2^{n}-\left( -1\right) ^{n}%
}{2+1}  \label{A001045}
\end{equation}%
which is the closed form for the Jacobsthal numbers $J_{n}$.

\section{Generalization of Jacobsthal numbers}

The example in the previous section suggest that we may have found a natural
way to generalize the Jacobsthal sequence. To explore this point further,
let's consider the following examples. Let us expand $1/4$ in powers of $%
\left( 1/3\right) $. This can be carried out by the CM method by placing $3$
units of mass at $x=0$ and one unit of mass at $x=1$. Let the zero
approximation for the center of mass $X_{0}^{\left( 1/4,1/3\right) }$ be the
position of the larger collection of unit masses,i.e., $x=0$. To obtain
higher corrections always combine $2$ units of mass from the larger set of
units of mass with the remaining single unit of mass and follow the steps in
the previous example. This leads to the following series for the center of
mass

\begin{equation*}
\frac{1}{4}=0+\sum_{n=1}^{\infty }\left( -1\right) ^{n+1}\left( \frac{1}{3}%
\right) ^{n}
\end{equation*}%
whose $n$th partial sum $X_{n}^{\left( 1/4,1/3\right) }$ is given by%
\begin{equation}
X_{n}^{\left( 1/4,1/3\right) }=\frac{1}{4}-\frac{1}{4}\left( \left( -\frac{1%
}{3}\right) \right) ^{n}=\frac{1}{\left( 3\right) ^{n}}\frac{\left( 3\right)
^{n}-\left( -1\right) ^{n}}{3+1},\text{ \ \ \ \ \ \ \ \ \ \ }n\geq 0  \notag
\end{equation}%
The sequence $S_{n}^{\left( 1/4,1/3\right) }=3^{n}X_{n}^{\left(
1/4,1/3\right) }$%
\begin{equation}
S_{n}=\frac{\left( 3\right) ^{n}-\left( -1\right) ^{n}}{3+1},\text{ \ \ \ \
\ \ \ \ \ \ }n=0,1,2,3,....  \label{eqnGenJacfor1/3+1}
\end{equation}%
satisfying the recursion relation%
\begin{equation*}
S_{n}^{\left( 1/4,1/3\right) }=\left\{ 
\begin{array}{c}
0\text{ \ \ \ \ \ \ \ \ \ \ \ \ \ \ \ \ \ \ \ \ \ \ \ \ \ \ \ \ \ \ \ \ \ \
\ \ }n=0 \\ 
1\text{\ \ \ \ \ \ \ \ \ \ \ \ \ \ \ \ \ \ \ \ \ \ \ \ \ \ \ \ \ \ \ \ \ \ \
\ \ \ }n=0 \\ 
2S_{n-1}^{\left( 1/4,1/3\right) }+3S_{n-2}^{\left( 1/4,1/3\right) }\text{ \
\ \ \ \ \ \ \ }n>1%
\end{array}%
\right.
\end{equation*}%
is Sloane \textbf{A015518, }whose members are%
\begin{equation}
0,1,2,7,20,61,182,547,1640,4921,14762,44287,132860,398581,...
\end{equation}%
Notice that equation (\ref{eqnGenJacfor1/3+1}) has a similar form to the
closed form for the Jacobsthal numbers in (\ref{A001045}).

Likewise if we expand $1/5$ in powers of $\left( 1/4\right) $ using our
method we obtain the infinite series%
\begin{equation}
\frac{1}{5}=0+\sum_{n=1}^{\infty }\left( -1\right) ^{n+1}\left( \frac{1}{4}%
\right) ^{n}  \label{eqex1/5inpo1/4}
\end{equation}%
whose $n$th partial sum is given by%
\begin{eqnarray*}
X_{n}^{\left( 1/5,1/4\right) } &=&\allowbreak \frac{1}{5}-\frac{1}{5}\left(
\left( -\frac{1}{4}\right) \right) ^{k} \\
&=&\frac{1}{\left( 4\right) ^{n}}\frac{\left( 4\right) ^{n}-\left( -1\right)
^{n}}{4+1}
\end{eqnarray*}%
The sequence defined by $S_{n}^{\left( 1/5,1/4\right) }=4^{n}X_{n}^{\left(
1/5,1/4\right) }$ reads%
\begin{equation}
S_{n}^{\left( 1/5,1/4\right) }=\frac{4^{n}-\left( -1\right) ^{n}}{4+1}
\label{eqnGenJacfor1/4+1}
\end{equation}%
or 
\begin{equation}
0,1,3,13,51,205,819,3277,13107,52429,209715,838861,3355443,....
\end{equation}%
which is Sloane \textbf{A015521}, satisfying the following recursion
relations%
\begin{equation*}
S_{n}^{\left( 1/5,1/4\right) }=\left\{ 
\begin{array}{c}
0\text{ \ \ \ \ \ \ \ \ \ \ \ \ \ \ \ \ \ \ \ \ \ \ \ \ \ \ \ \ \ \ \ \ \ \
\ \ }n=0 \\ 
1\text{\ \ \ \ \ \ \ \ \ \ \ \ \ \ \ \ \ \ \ \ \ \ \ \ \ \ \ \ \ \ \ \ \ \ \
\ \ \ }n=0 \\ 
3S_{n-1}^{\left( 1/5,1/4\right) }+4S_{n-2}^{\left( 1/5,1/4\right) }\text{ \
\ \ \ \ \ \ \ }n>1%
\end{array}%
\right.
\end{equation*}%
Notice that equation (\ref{eqnGenJacfor1/4+1}) has a similar form to the
closed form for the Jacobsthal numbers in (\ref{A001045}).

Next we consider the expansion of the fraction $1/5$ in powers of $\left(
2/3\right) $. To do this, choose iterative weight $w=2:1$, and apply our
method%
\begin{equation}
\frac{1}{5}=0+\left( \frac{1}{3}\right) -\left( \frac{2}{3}\right) \left( 
\frac{1}{3}\right) +\left( \frac{2}{3}\right) ^{2}\left( \frac{1}{3}\right)
-\left( \frac{2}{3}\right) ^{3}\left( \frac{1}{3}\right) +...
\label{eqexpa1/5inp2/3}
\end{equation}%
The $n$th partial sum is given by%
\begin{equation}
X_{n}^{\left( 1/5,2/3\right) }=\frac{1}{5}-\frac{1}{5}\left( -\frac{2}{3}%
\right) ^{n}=\frac{1}{3^{n}}\frac{\left( 3\right) ^{n}-\left( -2\right) ^{n}%
}{3+2}\text{, \ \ \ \ }n\geq 0
\end{equation}%
The $n$th partial sum $S_{n}^{\left( 1/5,2/3\right) }$ defined by%
\begin{equation}
S_{n}^{\left( 1/5,2/3\right) }\equiv 3^{n}X_{n}^{\left( 1/5,2/3\right) }=%
\frac{\left( 3\right) ^{n}-\left( -2\right) ^{n}}{3+2}
\end{equation}%
is precisely the closed form for Sloane \textbf{A015441} (\textbf{%
Generalized Fibonacci sequence}) whose members are%
\begin{equation}
0,1,1,7,13,55,133,463,1261,4039,....
\end{equation}%
which satisfy the three recursion relation%
\begin{equation}
S_{n}^{\left( 1/5,2/3\right) }=\left\{ 
\begin{array}{c}
0\text{ \ \ \ \ \ \ \ \ \ \ \ \ \ \ \ \ \ \ \ \ \ \ \ \ \ \ \ \ \ \ \ \ \ \ }%
n=0 \\ 
1\text{ \ \ \ \ \ \ \ \ \ \ \ \ \ \ \ \ \ \ \ \ \ \ \ \ \ \ \ \ \ \ \ \ \ \ }%
n=1 \\ 
S_{n-1}^{\left( 1/5,2/3\right) }+6S_{n-2}^{\left( 1/5,2/3\right) }\text{\ \
\ \ \ \ \ }n>1%
\end{array}%
\right.
\end{equation}%
The last example we consider in this section is the expansion of $1/7$ in
powers of $\left( 3/4\right) $. This can be achieved by choosing the
iterative weight $w=3:1$, and then applying our method. Thus skipping the
algebra, we get the infinite series%
\begin{equation}
\frac{1}{7}=\sum_{n=1}^{\infty }\frac{\left( -1\right) ^{n+1}}{3}\left( 
\frac{3}{4}\right) ^{n}  \notag
\end{equation}%
whose $n$th partial sum read%
\begin{equation}
X_{n}^{\left( 1/7,3/4\right) }=\allowbreak \frac{1}{7}-\frac{1}{7}\left( -%
\frac{3}{4}\right) ^{k}=\frac{1}{\left( 4\right) ^{n}}\frac{\left( 4\right)
^{n}-\left( -3\right) ^{n}}{4+3},\text{ \ \ \ \ \ }n\geq 0  \notag
\end{equation}%
In this case the numbers defined by $S_{n}^{\left( 1/7,3/4\right) }=\left(
4\right) ^{n}X_{n}^{\left( 1/7,3/4\right) }$, 
\begin{equation}
S_{n}^{\left( 1/7,3/4\right) }=\frac{\left( 4\right) ^{n}-\left( -3\right)
^{n}}{4+3},\text{ \ \ \ \ \ }n\geq 0  \notag
\end{equation}%
are the members of \ Sloane \textbf{A053404}. The sequence $S_{n}^{\left(
1/7,3/4\right) }$\ satisfies the recursion relation%
\begin{equation}
S_{n}^{\left( 1/7,3/4\right) }=\left\{ 
\begin{array}{c}
0\text{ \ \ \ \ \ \ \ \ \ \ \ \ \ \ \ \ \ \ \ \ \ \ \ \ \ \ \ \ \ \ \ \ \ \
\ }n=0 \\ 
1\text{ \ \ \ \ \ \ \ \ \ \ \ \ \ \ \ \ \ \ \ \ \ \ \ \ \ \ \ \ \ \ \ \ \ \
\ \ }n=1 \\ 
S_{n-1}^{\left( 1/7,3/4\right) }+12S_{n-2}^{\left( 1/7,3/4\right) }\text{\ \
\ \ \ \ \ \ }n>1%
\end{array}%
\right.
\end{equation}

In general for any integer $q\left( s,r\right) =s+r$, our method provides
the expansion of the rational number $1/q$ in powers of $\left( r/s\right) $
with $s>r>0$ and $r,s$ positive integers\footnote{%
Later we will relax the condition $q=r+s$.}. Our method leads to an $n$th
partial sums of the form $X_{n}^{\left( \frac{1}{q},\frac{r}{s}\right) }$ 
\begin{equation}
X_{n}^{\left( \frac{1}{q},\frac{r}{s}\right) }=\frac{1-\left( -r/s\right)
^{n}}{s+r}=\frac{1}{s^{n}}\frac{\left( s\right) ^{n}-\left( -r\right) ^{n}}{%
s+r},\text{ \ \ \ \ }q=r+s  \label{eqmoregeAbdnumbr/r+s}
\end{equation}%
for $n\geq 0$.

The sequence defined by $S_{n}^{\left( \frac{1}{q},\frac{r}{s}\right)
}=s^{n}X_{n}^{\left( \frac{1}{q},\frac{r}{s}\right) }$ 
\begin{equation}
S_{n}^{\left( \frac{1}{q},\frac{r}{s}\right) }=\frac{\left( s\right)
^{n}-\left( -r\right) ^{n}}{s+r},\text{ \ \ \ \ }q=r+s
\label{eqnGenJacfor1/q+1}
\end{equation}%
has the same form as the Jacobsthal sequence. This form offers a natural
generalization of the Jacobsthall numbers in (\ref{A001045}). Thus we
propose the following definition for "the $n$th more-generalized $q-$%
Jacobsthall number."

\begin{definition}
For any integers \ $s>r>0$, $q=s+r$ and any nonnegative integer $n$, we
define the $nth$\ more-generalized $q-$Jacobsthal number by%
\begin{equation}
J_{n}^{\left( \frac{1}{q},\frac{r}{s}\right) }\equiv s^{n}X_{n}^{\left( 
\frac{1}{q},\frac{r}{s}\right) }=\frac{\left( s\right) ^{n}-\left( -r\right)
^{n}}{s+r},\text{ \ \ \ \ }q=r+s  \label{eqmoregejacnumbr/r+s}
\end{equation}
\end{definition}

\begin{lemma}
The $n$th more-generalized $q-$Jacobsthal numbers satisfy the following
recursion relations%
\begin{equation}
J_{n+1}^{\left( \frac{1}{q},\frac{r}{s}\right) }=sJ_{n}^{\left( \frac{1}{q},%
\frac{r}{s}\right) }+\left( -r\right) ^{n},\text{\ \ \ \ \ \ \ \ }n\geqslant
0
\end{equation}%
\begin{equation}
J_{n+1}^{\left( \frac{1}{q},\frac{r}{s}\right) }=s^{n}-rJ_{n}^{\left( \frac{1%
}{q},\frac{r}{s}\right) },\text{\ \ \ \ \ \ \ \ \ \ \ }n\geqslant 0
\end{equation}%
and%
\begin{equation}
J_{n}^{\left( \frac{1}{q},\frac{r}{s}\right) }=\left\{ 
\begin{array}{c}
0,\text{ \ \ \ \ \ \ \ \ \ \ \ \ \ \ \ \ \ \ \ \ }n=0 \\ 
1,\text{\ \ \ \ \ \ \ \ \ \ \ \ \ \ \ \ \ \ \ \ \ }n=1 \\ 
\left( s-r\right) J_{n-1}^{\left( \frac{1}{q},\frac{r}{s}\right) }+\left(
rs\right) J_{n-2}^{\left( \frac{1}{q},\frac{r}{s}\right) },\text{ \ \ }n>1%
\end{array}%
\right.  \label{engenJacsnumbq=r+s-(r/s)}
\end{equation}
\end{lemma}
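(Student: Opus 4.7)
The plan is to prove all three relations by direct algebraic manipulation of the closed form
\[
J_{n}^{\left(\frac{1}{q},\frac{r}{s}\right)} = \frac{s^{n}-(-r)^{n}}{s+r}
\]
established in (\ref{eqmoregejacnumbr/r+s}); no induction is needed, and the only structural input is one observation about a quadratic.

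For the first two recursions I would simply substitute the closed form into each right-hand side and simplify over a common denominator. For the relation $J_{n+1}^{(1/q,r/s)} = sJ_{n}^{(1/q,r/s)} + (-r)^{n}$, multiplying the closed form by $s$ and adding $(-r)^{n}$ produces the numerator $s^{n+1} - s(-r)^{n} + (s+r)(-r)^{n} = s^{n+1} + r(-r)^{n} = s^{n+1} - (-r)^{n+1}$ over $s+r$, which is exactly $J_{n+1}^{(1/q,r/s)}$. For $J_{n+1}^{(1/q,r/s)} = s^{n} - rJ_{n}^{(1/q,r/s)}$, the analogous manipulation gives $(s+r)s^{n} - rs^{n} + r(-r)^{n} = s^{n+1} - (-r)^{n+1}$ in the numerator, again producing $J_{n+1}^{(1/q,r/s)}$. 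Both are one-line verifications.

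The three-term recursion is where a single idea is required. The two base quantities $s$ and $-r$ appearing in the closed form are precisely the roots of
\[
x^{2} - (s-r)x - rs = 0,
\]
since their sum is $s-r$ and their product is $-rs$. Consequently both sequences $s^{n}$ and $(-r)^{n}$ individually satisfy $a_{n} = (s-r)a_{n-1} + (rs)a_{n-2}$, and the same linear recursion is therefore obeyed by their difference divided by the constant $s+r$, that is, by $J_{n}^{(1/q,r/s)}$ itself. The initial conditions are read straight off the closed form: $J_{0}^{(1/q,r/s)} = (1-1)/(s+r) = 0$ and $J_{1}^{(1/q,r/s)} = (s-(-r))/(s+r) = 1$.

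I do not anticipate any real obstacle; the only thing one must stay vigilant about is signs. Tracking $(-r)^{n+1} = -r\cdot(-r)^{n}$ correctly, and remembering that the closed form features $-r$ rather than $r$ (so that the characteristic polynomial is $x^{2} - (s-r)x - rs$ rather than the sign-flipped $x^{2} - (s+r)x + rs$), are the only spots where a careless slip would propagate through the calculation.
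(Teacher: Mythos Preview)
Your proof is correct. The paper itself does not supply an explicit proof of this lemma; it simply states the three recursions immediately after Definition~1 and proceeds to the generating-function theorem, implicitly leaving verification from the closed form (\ref{eqmoregejacnumbr/r+s}) to the reader. Your approach---direct substitution for the two-term recursions and the characteristic-polynomial observation that $s$ and $-r$ are the roots of $x^{2}-(s-r)x-rs=0$ for the three-term recursion---is exactly the natural argument and is carried out with correct sign tracking throughout.
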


\begin{theorem}
The generating function for the $nth$\ more generalized $q-$Jacobsthal
numbers is given by%
\begin{equation}
J^{\left( \frac{1}{q},\frac{r}{s}\right) }\left( z\right)
=\sum\limits_{n=0}^{\infty }J_{n}^{\left( \frac{1}{q},\frac{r}{s}\right)
}z^{n}=\frac{z}{1-\left( s-r\right) z-rsz^{2}}
\end{equation}
\end{theorem}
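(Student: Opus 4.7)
The plan is to exploit the closed form for $J_n^{\left(\frac{1}{q},\frac{r}{s}\right)}$ established in the preceding definition and bypass the recursion altogether; the generating function then reduces to the sum of two elementary geometric series.

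First, substitute (\ref{eqmoregejacnumbr/r+s}) directly into the defining power series and split the difference:
\begin{equation*}
\sum_{n=0}^{\infty} J_n^{\left(\frac{1}{q},\frac{r}{s}\right)} z^n
= \frac{1}{s+r}\sum_{n=0}^{\infty}\bigl(s^n - (-r)^n\bigr) z^n
= \frac{1}{s+r}\left(\sum_{n=0}^{\infty}(sz)^n - \sum_{n=0}^{\infty}(-rz)^n\right).
\end{equation*}
Both series are geometric, so for $|z|$ small enough (specifically $|z| < 1/\max(s,r)$) they converge to $1/(1-sz)$ and $1/(1+rz)$ respectively. Combining over a common denominator gives
\begin{equation*}
\frac{1}{s+r}\cdot\frac{(1+rz)-(1-sz)}{(1-sz)(1+rz)} = \frac{1}{s+r}\cdot\frac{(s+r)z}{(1-sz)(1+rz)} = \frac{z}{(1-sz)(1+rz)}.
\end{equation*}
Expanding the denominator as $(1-sz)(1+rz) = 1 - (s-r)z - rsz^{2}$ yields exactly the claimed closed form.

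As a sanity check (and an alternative route if one prefers), the same formula falls out of the three-term recursion (\ref{engenJacsnumbq=r+s-(r/s)}): writing $J(z) = \sum_{n\geq 0} J_n z^n$, using $J_0=0$, $J_1=1$, and multiplying the recursion by $z^n$ and summing over $n\geq 2$ gives $J(z) = z + (s-r)z\,J(z) + rsz^{2}J(z)$, from which the result follows by solving for $J(z)$.

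The only subtlety, and the one I would flag, is the region of convergence: the algebraic identities above manipulate formal series but the equalities as analytic functions require $|sz|<1$ and $|rz|<1$. Since the ring of formal power series over $\mathbb{Q}$ suffices for the identity of generating functions, I would present the argument formally and note the analytic interpretation as a remark; no other step presents a genuine obstacle.
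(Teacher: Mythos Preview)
Your proof is correct and complete. The approach, however, differs from the paper's: the paper does not compute the generating function directly but instead observes that the three-term recurrence (\ref{engenJacsnumbq=r+s-(r/s)}) is the Lucas sequence of the first kind $U_n(P,Q)$ with $P=s-r$ and $Q=-rs$, and then simply substitutes these values into the known Lucas generating function $z/(1-Pz+Qz^{2})$. Your route is more self-contained and elementary---it needs nothing beyond the geometric series and the explicit formula (\ref{eqmoregejacnumbr/r+s})---whereas the paper's route is shorter but presupposes the Lucas-sequence generating function as a black box. Your parenthetical ``alternative route'' via the recursion is essentially the standard derivation of that Lucas result, so in effect you have supplied the proof the paper omits.
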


\begin{proof}
The recurrence formula in (\ref{engenJacsnumbq=r+s-(r/s)}) is a special case
of \textbf{Lucas }sequences of the first kind $U_{n}(P,Q)$ defined by the
recursion relation%
\begin{eqnarray}
U_{0}(P,Q) &=&0  \notag \\
U_{1}(P,Q) &=&1  \notag \\
U_{n}(P,Q) &=&PU_{n-1}(P,Q)-QU_{n-2}(P,Q)  \label{eqgwnrejatooursluks}
\end{eqnarray}%
for $P=\left( s-r\right) $ and $Q=-rs$. \ The ordinary generating function
for $U_{n}(P,Q)$ in (\ref{eqgwnrejatooursluks}) is given by 
\begin{equation}
\sum\limits_{n=0}^{\infty }U_{n}(P,Q)z^{n}=\frac{z}{1-Pz+Qz^{2}}
\label{ealucasseqgefu}
\end{equation}%
Substituting $P=\left( s-r\right) $ and $Q=-rs$ in (\ref{ealucasseqgefu}),
we obtain the desired result for $J_{n}^{\left( \frac{1}{q},\frac{r}{s}%
\right) }$
\end{proof}

At this point it is worth observing that for the special case in the
expansion of $1/3$ in powers of $\left( 1/2\right) $, we have $q=3,r=1,s=2$
and our definition (\ref{eqmoregejacnumbr/r+s}) reduces to 
\begin{equation}
J_{n}^{\left( \frac{1}{3},\frac{1}{2}\right) }=\frac{\left( 2\right)
^{n}-\left( -1\right) ^{n}}{2+1}  \label{eqdeofnthJacNumber}
\end{equation}%
which is the formula for Jacobsthall numbers $J_{n}$

For the case of $q=s+r=s+1$, the $nth$\textit{\ }more generalized $q$%
-Jacobsthal numbers in (\ref{eqmoregejacnumbr/r+s}) reduce to%
\begin{equation}
J_{n}^{\left( \frac{1}{s+1},\frac{1}{s}\right) }=\frac{\left( s\right)
^{n}-\left( -1\right) ^{n}}{s+1}=J_{sn}
\end{equation}%
the generalized Jacobsthal number, $J_{sn}$, introduced in \cite{Ji Young
Choi}. Thus we see that the generalized Jacobsthal numbers introduced in 
\cite{Ji Young Choi} are still a special case of the $n$th more-generalized $%
q-$Jacobsthal numbers defined by equation (\ref{eqmoregejacnumbr/r+s}).

\section{\protect\bigskip Continuation to negative value of $n$}

We can extend the definition of the more-generalized $q-$Jacobsthal numbers
in (\ref{eqmoregejacnumbr/r+s}) to negative values of $n$. This we can do by
assuming that the recursion relation in (\ref{engenJacsnumbq=r+s-(r/s)})
provides a continuation to negative values of $n$.

\begin{theorem}
For any nonnegative integer $n$, we have 
\begin{equation}
J_{-n}^{\left( \frac{1}{q},\frac{r}{s}\right) }=-\left( \frac{-1}{rs}\right)
^{n}\frac{s^{n}-\left( -r\right) ^{n}}{s+r}  \label{Negoregejacnumbr/r+s}
\end{equation}
\end{theorem}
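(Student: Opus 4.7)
The plan is to use the recursion relation in (\ref{engenJacsnumbq=r+s-(r/s)}) read backwards as the definition of $J_{-n}^{(1/q,r/s)}$. Since $r,s>0$, we have $rs\neq 0$, so the three-term relation
$$J_{n}^{\left(\frac{1}{q},\frac{r}{s}\right)}=(s-r)J_{n-1}^{\left(\frac{1}{q},\frac{r}{s}\right)}+rs\,J_{n-2}^{\left(\frac{1}{q},\frac{r}{s}\right)}$$
can be inverted to
$$J_{n-2}^{\left(\frac{1}{q},\frac{r}{s}\right)}=\frac{1}{rs}\Bigl(J_{n}^{\left(\frac{1}{q},\frac{r}{s}\right)}-(s-r)J_{n-1}^{\left(\frac{1}{q},\frac{r}{s}\right)}\Bigr),$$
which uniquely determines $J_{-n}^{(1/q,r/s)}$ from the initial data $J_{0}=0$, $J_{1}=1$.

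The cleanest route is then to observe that the characteristic polynomial of the recursion is $x^{2}-(s-r)x-rs=(x-s)(x+r)$, whose roots $s$ and $-r$ are both nonzero. Consequently the closed-form expression
$$f(n):=\frac{s^{n}-(-r)^{n}}{s+r}$$
satisfies the same linear recurrence for every integer $n$, positive or negative, and not merely for $n\geq 0$. Since $f(0)=0$ and $f(1)=1$ agree with $J_{0}^{(1/q,r/s)}$ and $J_{1}^{(1/q,r/s)}$, uniqueness of solutions to a two-step linear recurrence with two consecutive initial values forces $J_{n}^{(1/q,r/s)}=f(n)$ for all integers $n$. The extension to negative indices therefore amounts to evaluating $f$ at $-n$.

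The final step is purely algebraic: substitute $-n$ into $f$ and simplify,
$$f(-n)=\frac{s^{-n}-(-r)^{-n}}{s+r}=\frac{1}{(-rs)^{n}}\cdot\frac{(-r)^{n}-s^{n}}{s+r}=-\left(\frac{-1}{rs}\right)^{n}\frac{s^{n}-(-r)^{n}}{s+r},$$
which is precisely the claimed formula (\ref{Negoregejacnumbr/r+s}). As a sanity check one verifies $J_{0}^{(1/q,r/s)}=0$ directly and $J_{-1}^{(1/q,r/s)}=\bigl(J_{1}-(s-r)J_{0}\bigr)/(rs)=1/(rs)$, which matches the right-hand side at $n=1$.

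There is no serious obstacle here; the only point that requires a moment of care is justifying that the recursion can indeed be run backward, i.e.\ that $rs\neq 0$, and that the Binet-type closed form $f(n)$ extends to all of $\mathbb{Z}$ precisely because both characteristic roots are nonzero. If one prefers a more pedestrian route, a straightforward downward induction on $n\geq 0$ using the inverted recursion accomplishes the same thing, with base cases $n=0$ and $n=1$.
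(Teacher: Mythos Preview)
Your proof is correct and follows essentially the same approach as the paper: both invert the recursion (\ref{engenJacsnumbq=r+s-(r/s)}) to define $J_{-n}$, compute $J_{-1}=1/(rs)$ from the initial data, and then obtain the closed form. The paper is terser, saying only ``with little effort we obtain the result'' after establishing the initial values; you make explicit what that ``little effort'' is by noting that the Binet-type formula $f(n)=\bigl(s^{n}-(-r)^{n}\bigr)/(s+r)$ automatically satisfies the recurrence for all integers $n$ because both characteristic roots $s$ and $-r$ are nonzero, and then you carry out the algebraic simplification of $f(-n)$ in full.
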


\begin{proof}
Solve (\ref{engenJacsnumbq=r+s-(r/s)}) for $J_{n-2}^{\left( \frac{1}{q},%
\frac{r}{s}\right) }$ 
\begin{equation}
J_{n-2}^{\left( \frac{1}{q},\frac{r}{s}\right) }=-\frac{s-r}{rs}%
J_{n-1}^{\left( \frac{1}{q},\frac{r}{s}\right) }+\frac{1}{rs}J_{n}^{\left( 
\frac{1}{q},\frac{r}{s}\right) }  \label{MoreGenjactonen}
\end{equation}%
For $n=1$, the above equation gives 
\begin{eqnarray}
J_{-1}^{\left( \frac{1}{q},\frac{r}{s}\right) } &=&-\frac{s-r}{rs}%
J_{0}^{\left( \frac{1}{q},\frac{r}{s}\right) }+\frac{1}{rs}J_{1}^{\left( 
\frac{1}{q},\frac{r}{s}\right) }  \notag \\
&=&0+\frac{1}{rs}
\end{eqnarray}%
The two initial conditions 
\begin{eqnarray}
J_{0}^{\left( \frac{1}{q},\frac{r}{s}\right) } &=&0\ \text{\ } \\
J_{-1}^{\left( \frac{1}{q},\frac{r}{s}\right) } &=&\frac{1}{rs}
\end{eqnarray}%
and the recursion relation in (\ref{MoreGenjactonen}) now fix the value of $%
J_{-n}^{\left( \frac{1}{q},\frac{r}{s}\right) }$ for all $n>1$. Thus with
little effort we obtain the result in (\ref{Negoregejacnumbr/r+s})
\end{proof}

\begin{lemma}
For any nonnegative integer $n$, 
\begin{equation*}
J_{-n}^{\left( \frac{1}{q},\frac{r}{s}\right) }=-\left( \frac{-1}{rs}\right)
^{n}J_{n}^{\left( \frac{1}{q},\frac{r}{s}\right) }
\end{equation*}
\end{lemma}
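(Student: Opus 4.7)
The plan is essentially a one-line substitution, since the heavy lifting was already done in the preceding theorem. That theorem gives the explicit closed form
\[
J_{-n}^{\left( \frac{1}{q},\frac{r}{s}\right) }=-\left( \frac{-1}{rs}\right)^{n}\frac{s^{n}-\left( -r\right) ^{n}}{s+r},
\]
while the definition of the $n$th more-generalized $q$-Jacobsthal number reads
\[
J_{n}^{\left( \frac{1}{q},\frac{r}{s}\right) }=\frac{s^{n}-\left( -r\right) ^{n}}{s+r}.
\]
So the strategy is simply to identify the fraction $\frac{s^{n}-(-r)^{n}}{s+r}$ appearing on the right-hand side of the theorem as $J_{n}^{\left( \frac{1}{q},\frac{r}{s}\right) }$ and substitute.

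Concretely, I would begin the proof by citing the previous theorem to justify the closed form for $J_{-n}^{\left( \frac{1}{q},\frac{r}{s}\right) }$, then invoke the definition of $J_{n}^{\left( \frac{1}{q},\frac{r}{s}\right) }$ to rewrite the fractional factor, and conclude
\[
J_{-n}^{\left( \frac{1}{q},\frac{r}{s}\right) }=-\left( \frac{-1}{rs}\right)^{n} J_{n}^{\left( \frac{1}{q},\frac{r}{s}\right) }.
\]
The case $n=0$ is trivial since both sides vanish (indeed, $J_{0}^{\left( \frac{1}{q},\frac{r}{s}\right) }=0$), so it requires no separate argument.

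There is no genuine obstacle: the lemma is purely a reformulation of the preceding theorem, repackaging the explicit formula into a compact identity relating $J_{-n}$ to $J_{n}$. The only thing worth double-checking is that the sign factor $-\left(\tfrac{-1}{rs}\right)^{n}$ is transcribed correctly and that no extraneous $(-1)^{n}$ has been absorbed or dropped when pulling out the common factor. Once that bookkeeping is verified, the proof is complete in two lines.
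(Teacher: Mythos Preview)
Your proposal is correct and matches the paper's own argument essentially verbatim: the paper simply states that the lemma ``follows at once from the above theorem and the definition of $J_{n}^{\left( \frac{1}{q},\frac{r}{s}\right) }$,'' which is exactly the substitution you describe. Your added remarks on the $n=0$ case and the sign bookkeeping are sensible but go slightly beyond what the paper bothers to say.
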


This result follows at once from the above theorem and the definition of $%
J_{n}^{\left( \frac{1}{q},\frac{r}{s}\right) }$ in (\ref%
{eqmoregejacnumbr/r+s}). The numbers $J_{-n}^{\left( \frac{1}{q},\frac{r}{s}%
\right) }$ are related to the partial sums $X_{n}^{\left( \frac{1}{q},\frac{r%
}{s}\right) }$ obtained in (\ref{eqmoregeAbdnumbr/r+s})

\begin{equation}
J_{-n}^{\left( \frac{1}{q},\frac{r}{s}\right) }=-\left( -\frac{1}{r}\right)
^{n}X_{n}^{\left( \frac{1}{q},\frac{r}{s}\right) },\text{ \ \ \ \ }%
n\geqslant 0
\end{equation}

\begin{theorem}
For any nonnegative integer $n$, the $nth$\textit{\ }more-generalized $q$%
-Jacobsthal numbers satisfy the recurrence relations 
\begin{eqnarray}
J_{-n}^{\left( \frac{1}{q},\frac{r}{s}\right) } &=&\frac{1}{s^{n+1}}-\text{\ 
}rJ_{-\left( n+1\right) }^{\left( \frac{1}{q},\frac{r}{s}\right) }\text{\ \
\ \ \ \ \ \ \ \ \ \ \ \ \ \ \ \ }n\geqslant 0 \\
J_{-n}^{\left( \frac{1}{q},\frac{r}{s}\right) } &=&\text{\ }\frac{\left(
-1\right) ^{n+1}}{r^{n+1}}+sJ_{-\left( n+1\right) }^{\left( \frac{1}{q},%
\frac{r}{s}\right) }\text{\ \ \ \ \ \ \ \ \ \ \ \ }n\geqslant 0\text{ \ }
\end{eqnarray}%
and%
\begin{equation}
J_{-n}^{\left( \frac{1}{q},\frac{r}{s}\right) }=\left\{ 
\begin{array}{c}
0\text{ \ \ \ \ \ \ \ \ \ \ \ \ \ \ \ \ \ \ \ \ \ \ \ \ \ \ \ \ \ \ \ \ \ }%
n=0 \\ 
\frac{1}{rs}\text{\ \ \ \ \ \ \ \ \ \ \ \ \ \ \ \ \ \ \ \ \ \ \ \ \ \ \ \ \
\ \ \ }n=1 \\ 
-\frac{s-r}{rs}J_{-\left( n-1\right) }^{\left( \frac{1}{q},\frac{r}{s}%
\right) }+\frac{1}{rs}J_{-\left( n-2\right) }^{\left( \frac{1}{q},\frac{r}{s}%
\right) }\text{ \ \ \ \ \ \ }n>1%
\end{array}%
\right.
\end{equation}
\end{theorem}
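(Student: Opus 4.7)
The plan is to handle the three identities in two batches. The third (the homogeneous three-term recursion on consecutive negative-index values) is essentially free: equation (\ref{MoreGenjactonen}), derived in the proof of the preceding theorem, is already the required relation after the relabeling $n\mapsto -(n-2)$, and the initial values $J_{0}^{\left(\frac{1}{q},\frac{r}{s}\right)}=0$ and $J_{-1}^{\left(\frac{1}{q},\frac{r}{s}\right)}=\frac{1}{rs}$ needed to launch the recursion were already recorded there. So only the two inhomogeneous two-term identities require real work.

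For those, the key observation is that the closed form (\ref{Negoregejacnumbr/r+s}) is nothing but the Binet-type expression $J_{m}^{\left(\frac{1}{q},\frac{r}{s}\right)}=\bigl(s^{m}-(-r)^{m}\bigr)/(s+r)$ evaluated at $m=-n$. This is verified in one line by rewriting $-(-1/(rs))^{n}=(-1)^{n+1}/(rs)^{n}$, distributing, and recognizing the result as $\bigl(s^{-n}-(-r)^{-n}\bigr)/(s+r)$ via $(-r)^{-n}=(-1)^{n}/r^{n}$. Once the Binet form is available for every integer argument, the two forward recurrences of the preceding Lemma, namely $J_{m+1}^{\left(\frac{1}{q},\frac{r}{s}\right)}=s^{m}-rJ_{m}^{\left(\frac{1}{q},\frac{r}{s}\right)}$ and $J_{m+1}^{\left(\frac{1}{q},\frac{r}{s}\right)}=sJ_{m}^{\left(\frac{1}{q},\frac{r}{s}\right)}+(-r)^{m}$, hold for every integer $m$ by the same algebraic identity that proves them for $m\geq 0$. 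Substituting $m=-n-1$ then converts these directly into the two identities of the theorem, using $(-r)^{-n-1}=(-1)^{n+1}/r^{n+1}$.

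The main obstacle is sign and exponent bookkeeping: factors like $(-r)^{-n-1}$, $(-1/(rs))^{n}$, and $r(-r)^{-(n+1)}$ all carry signs that must be tracked carefully, and it is easy to drop a minus when reducing $(-1)^{-k}$ to $(-1)^{k}$. Beyond that the argument is mechanical, and no induction is needed since the Binet representation resolves the whole family at once. A quick sanity check at $n=0$, where the two two-term identities collapse to $0=1/s-r\cdot 1/(rs)$ and $0=-1/r+s\cdot 1/(rs)$, confirms consistency with the initial data $J_{0}^{\left(\frac{1}{q},\frac{r}{s}\right)}=0$, $J_{-1}^{\left(\frac{1}{q},\frac{r}{s}\right)}=1/(rs)$ appearing in the three-term recursion.
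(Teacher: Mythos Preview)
Your argument is correct and is essentially the paper's own approach: the paper also derives these relations by combining the closed form for $J_{-n}$ with the positive-index recursions of the earlier Lemma (phrased there as ``solve for $J_{n}$ in terms of $J_{-n}$ and substitute''), which is exactly what your Binet-extension reformulation accomplishes. Your observation that the three-term recursion is just (\ref{MoreGenjactonen}) relabeled, together with the already-recorded initial values, matches the paper's derivation as well.
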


The recursion relations for $J_{-n}^{\left( \frac{1}{q},\frac{r}{s}\right) }$
may be obtained by solving for $J_{n}^{\left( \frac{1}{q},\frac{r}{s}\right)
}$ in terms of $J_{-n}^{\left( \frac{1}{q},\frac{r}{s}\right) }$ and then
substituting in the recursion relations for $J_{n}^{\left( \frac{1}{q},\frac{%
r}{s}\right) }$.

Using the formulas in equations (\ref{eqmoregejacnumbr/r+s}) and (\ref%
{Negoregejacnumbr/r+s}), we obtain the complete set of the Jacobsthal
numbers arising in the expansion of $1/q=1/\left( s+r\right) =1/\left(
2+1\right) $ in powers of $\left( r/s\right) =\left( 1/2\right) $ 
\begin{equation}
...,\frac{43}{128},-\frac{21}{64},\frac{11}{32},-\frac{5}{16},\frac{3}{8},-%
\frac{1}{4},\frac{1}{2},0,1,1,3,5,11,21,43,.....
\end{equation}%
It is worth noting that our extension of the Jacobsthal numbers and infinity
many other sequences of numbers show that their use gives a completeness and
insight that otherwise is lacking in the simpler theory of Jacobsthal
sequences for positive $n$.

\section{\textbf{Jacobsthal-like numbers}.}

\bigskip To introduce what we mean by Jacobsthal-like numbers, let's
consider the following example of expanding $1/7$ in powers $\left(
1/2\right) $ and $\left( 1/8\right) $. The expansion in powers of $\left(
1/2\right) $ can be carried out by the center of mass method by placing $6$
units of mass at $x=0$ and one unit of mass at $x=1$ and choosing the zero
approximation for the center of mass $X_{0}$ to be the position of the
larger collection ($x=0$). Setting the iterative weight $w=1:1$, our method
yields the infinite series%
\begin{eqnarray}
\frac{1}{7} &=&X^{\left( 1/7,1/2\right) }=0  \notag \\
&&+\left( \frac{1}{2}\right) -\left( \frac{1}{2}\right) ^{2}-\left( \frac{1}{%
2}\right) ^{3}+\left( \frac{1}{2}\right) ^{4}-\left( \frac{1}{2}\right)
^{5}-\left( \frac{1}{2}\right) ^{6}  \notag \\
&&+\left( \frac{1}{2}\right) ^{7}-\left( \frac{1}{2}\right) ^{8}-\left( 
\frac{1}{2}\right) ^{9}+\left( \frac{1}{2}\right) ^{10}-\left( \frac{1}{2}%
\right) ^{11}-\left( \frac{1}{2}\right) ^{12}  \notag \\
&&+......  \label{eqexpan1/7-1/2}
\end{eqnarray}%
whose partial sums $X_{n}^{\left( 1/7,1/2\right) }$ are given by%
\begin{eqnarray}
&&0,\frac{1}{2},\frac{1}{2^{2}},\frac{1}{2^{3}},\frac{3}{2^{4}},\frac{5}{%
2^{5}},\frac{9}{2^{6}},\frac{19}{2^{7}},\frac{37}{2^{8}},\frac{73}{2^{9}},%
\frac{147}{2^{10}},\frac{293}{2^{11}},\frac{585}{2^{12}},\frac{1171}{2^{13}},%
\frac{2341}{2^{14}},  \notag \\
&&\frac{4681}{2^{15}},\frac{9363}{2^{16}},\frac{18725}{2^{17}},\frac{37449}{%
2^{18}},\frac{74899}{2^{19}},\frac{149797}{2^{21}},\frac{299593}{2^{22}},....
\label{eqnpartsumsof1/7-1/2}
\end{eqnarray}%
The subsequence $S_{n}=$ $2^{3n}X_{3n}^{\left( 1/7,1/2\right) }$ of the
above sequence 
\begin{equation}
0,1,9,73,585,4681,37449,299593,2396745,19173961,153391689,...
\end{equation}%
is Sloane \textbf{A023001}%
\begin{equation}
\frac{\left( 8\right) ^{n}-1}{8-1},\text{ }n=0,1,2,3,...
\end{equation}%
This sequence may be obtained directly by rearranging the terms in the sum
in equation (\ref{eqexpan1/7-1/2}) in the following way

\begin{eqnarray*}
\frac{1}{7} &=&X^{\left( 1/7,1/2\right) }=\left[ 0+\left( \frac{1}{2}\right)
-\left( \frac{1}{2}\right) ^{2}-\left( \frac{1}{2}\right) ^{3}\right] \\
&&\times \left[ 1+\left( \frac{1}{2}\right) ^{3}+\left( \frac{1}{2}\right)
^{6}+\left( \frac{1}{2}\right) ^{9}+\left( \frac{1}{2}\right) ^{12}+....%
\right] \\
&=&\sum_{k=0}^{\infty }\left( \frac{1}{2}\right)
^{3k+3}=\sum\limits_{n=1}^{\infty }\left( \frac{1}{8}\right) ^{n}
\end{eqnarray*}%
so we obtain an expansion of $1/7$ in powers of $\left( 1/8\right) $. Since
the right hand side is given in powers of $1/8$, we denote the $n$th partial
sum in the right hand side by $\widetilde{X}_{n}^{\left( 1/7,1/8\right) }$.
We note here that the $n$th partial sum\footnote{%
Notice that I extended the rang of $n>0$ to $n\geq 0$ where I have assigned
value of zero to $0$th partial sum. This I have done to keep the notation
consistent with that in the literature.}

\begin{equation}
\widetilde{X}_{n}^{\left( 1/7,1/8\right) }=\frac{1}{7}-\frac{1}{7}\left( 
\frac{1}{8}\right) ^{n}=\frac{1}{\left( 8\right) ^{n}}\frac{\left( 8\right)
^{n}-1}{8-1},\text{ \ \ \ \ \ \ \ \ \ \ }n\geq 0
\end{equation}%
is equal to the partial sums $X_{3n}^{\left( 1/7,1/2\right) }$ obtained in (%
\ref{eqnpartsumsof1/7-1/2}). Observe here we have $r=1,s=8$ and so $\left(
s-r\right) =7$, so in this case the partial sums have the form%
\begin{equation}
\widetilde{X}_{n}^{\left( 1/7,1/8\right) }=\frac{1}{\left( s\right) ^{n}}%
\frac{\left( s\right) ^{n}-\left( r\right) ^{n}}{s-r},\text{ \ \ \ \ \ \ \ \
\ \ }n\geq 0
\end{equation}%
or if we let $\widetilde{S}_{n}^{\left( 1/7,1/8\right) }=8^{n}\widetilde{X}%
_{n}^{\left( 1/7,1/8\right) }$, we get 
\begin{equation}
\widetilde{S}_{n}^{\left( 1/7,1/8\right) }=\frac{\left( 8\right) ^{n}-1}{7}=%
\frac{\left( 8\right) ^{n}-1}{8-1},\text{ \ \ \ \ \ \ \ \ \ \ }n\geq 0
\end{equation}%
which is Sloane \textbf{A023001} whose members are%
\begin{equation}
0,1,9,73,585,4681,37449,299593,2396745,19173961,153391689,...
\end{equation}

Next we consider the product $\widetilde{S}_{n}^{\left( \frac{1}{\widetilde{q%
}},\frac{r}{s}\right) }$ defined by the product $s^{n}\cdot \widetilde{X}%
_{n}^{\left( \frac{1}{\widetilde{q}},\frac{r}{s}\right) }$ 
\begin{equation*}
\widetilde{S}_{n}^{\left( \frac{1}{\widetilde{q}},\frac{r}{s}\right) }=\frac{%
s^{n}-r^{n}}{s-r}
\end{equation*}%
where $\widetilde{q}=s-r$. We now define the \textbf{more-generalized
Jacobsthal-like numbers}.\textit{\ }

\begin{definition}
For any nonnegative integer $n$, the $n$th more-generalized $q-$
Jacobsthall-like numbers, $\widetilde{J}_{n}^{\left( \frac{1}{q},\frac{r}{s}%
\right) }$, are defined by
\end{definition}

\begin{equation}
\widetilde{J}_{n}^{\left( \frac{1}{\widetilde{q}},\frac{r}{s}\right) }=s^{n}%
\widetilde{X}_{n}^{\left( \frac{1}{\widetilde{q}},\frac{r}{s}\right) }=\frac{%
s^{n}-r^{n}}{s-r}  \label{eqdeofnthJacNumberDual}
\end{equation}%
Observe if we let $r\rightarrow -r$ in equation (\ref{eqdeofnthJacNumberDual}%
), we recover the $n-$th more-generalized $q-$Jacobsthall numbers, $%
J_{n}^{\left( \frac{1}{q},\frac{r}{s}\right) }$ defined in (\ref%
{eqmoregejacnumbr/r+s}). Thus we have the following corollary:

\begin{corollary}
For any nonnegative integers $n,s,r$ and $s>r$,
\end{corollary}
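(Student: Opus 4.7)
The plan is that this corollary follows immediately from the observation recorded just before its statement, namely that the formal substitution $r\mapsto -r$ turns the Jacobsthal-like closed form into the more-generalized Jacobsthal closed form. I would present the proof as a one-line substitution together with a brief consistency check, citing the two defining identities already established.

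First, starting from the definition (\ref{eqdeofnthJacNumberDual}),
\begin{equation*}
\widetilde{J}_{n}^{\left(\frac{1}{\widetilde{q}},\frac{r}{s}\right)}=\frac{s^{n}-r^{n}}{s-r},\qquad \widetilde{q}=s-r,
\end{equation*}
I would perform the replacement $r\mapsto -r$ throughout. Under this substitution $\widetilde{q}=s-r$ becomes $s-(-r)=s+r=q$, the denominator becomes $s+r$, and the numerator becomes $s^{n}-(-r)^{n}$. The resulting expression is $(s^{n}-(-r)^{n})/(s+r)$, which is precisely the right-hand side of (\ref{eqmoregejacnumbr/r+s}) defining $J_{n}^{\left(\frac{1}{q},\frac{r}{s}\right)}$. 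Thus the corollary reduces to a single algebraic identification between the two closed forms.

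Second, I would note that no separate verification of the Lucas-type recursion (\ref{engenJacsnumbq=r+s-(r/s)}) is needed, since both sides are explicit polynomial-in-$r$ expressions and their values at $n=0,1$ coincide trivially ($\widetilde{J}_{0}=0=J_{0}$ and $\widetilde{J}_{1}=1=J_{1}$, independently of any sign convention on $r$). The identity is therefore pointwise, valid for every admissible triple $(n,s,r)$ with $s>r>0$ and $n\geq 0$.

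I do not anticipate a genuine mathematical obstacle; the result is essentially a restatement of the observation highlighted in the paragraph preceding the corollary. The only care required is purely notational: the parameter $q$ appearing in $J_{n}^{\left(1/q,r/s\right)}$ equals $s+r$, while the parameter $\widetilde{q}$ appearing in $\widetilde{J}_{n}^{\left(1/\widetilde{q},r/s\right)}$ equals $s-r$, so one must track the simultaneous change of the superscript label and the internal sign to avoid an apparent mismatch in the indexing when writing the final identity.
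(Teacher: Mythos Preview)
Your proposal is correct and matches the paper's approach exactly: the paper also derives the corollary as an immediate consequence of the substitution $r\mapsto -r$ in the closed form (\ref{eqdeofnthJacNumberDual}), noting that this recovers (\ref{eqmoregejacnumbr/r+s}). Your additional remarks about the initial values and the bookkeeping of the labels $q=s+r$ versus $\widetilde{q}=s-r$ go slightly beyond what the paper writes but are fully consistent with it.
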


\begin{equation}
J_{n}^{\left( \frac{1}{q\left( s,r\right) },\frac{r}{s}\right) }=\widetilde{J%
}_{n}^{\left( \frac{1}{\widetilde{q}\left( s,-r\right) },\frac{\left(
-r\right) }{s}\right) }
\end{equation}

For $s=3,r=1$, equation (\ref{eqdeofnthJacNumberDual}) gives

\begin{equation}
0,1,4,13,40,121,364,1093,3280,...
\end{equation}%
for the Jacobsthal-like numbers. This is Sloane \textbf{A003462.}

\begin{lemma}
The more-generalized $q-$Jacobsthal-like numbers, $\widetilde{J}_{n}^{\left( 
\frac{1}{q},\frac{r}{s}\right) }$, satisfy the recursion relations%
\begin{eqnarray}
\widetilde{J}_{n+1}^{\left( \frac{1}{\widetilde{q}},\frac{r}{s}\right) } &=&s%
\widetilde{J}_{n}^{\left( \frac{1}{\widetilde{q}},\frac{r}{s}\right) }+r^{n}%
\text{\ \ \ \ \ \ \ \ \ \ \ \ \ \ \ \ \ \ }n\geqslant 0  \notag \\
\widetilde{J}_{n+1}^{\left( \frac{1}{\widetilde{q}},\frac{r}{s}\right) }
&=&s^{n}+r\widetilde{J}_{n}^{\left( \frac{1}{\widetilde{q}},\frac{r}{s}%
\right) }\text{\ \ \ \ \ \ \ \ \ \ \ \ \ \ \ \ \ \ }n\geqslant 0
\label{eqdualjn2rec}
\end{eqnarray}%
and
\end{lemma}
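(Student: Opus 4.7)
The plan is to verify both displayed first-order recurrences directly from the closed form $\widetilde{J}_{n}^{(1/\widetilde{q},r/s)}=(s^{n}-r^{n})/(s-r)$ established in the preceding definition. For the identity $\widetilde{J}_{n+1}=s\widetilde{J}_{n}+r^{n}$ I would substitute the closed form into the right-hand side, put everything over the common denominator $s-r$, and observe that the numerator $s^{n+1}-sr^{n}+(s-r)r^{n}$ collapses immediately to $s^{n+1}-r^{n+1}$. The companion identity $\widetilde{J}_{n+1}=s^{n}+r\widetilde{J}_{n}$ reduces by the symmetric rearrangement in which the roles of the two exponential terms are interchanged. Each is a one-line algebraic simplification, and there is no hidden obstacle beyond keeping the denominator $s-r$ in view, which is legitimate under the standing assumption $s>r$.

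For the three-term recurrence toward which the lemma is clearly building (parallel to equation (\ref{engenJacsnumbq=r+s-(r/s)}) for the $J_{n}$ family, but truncated in this excerpt), my approach is to eliminate the inhomogeneous exponential terms between the two first-order recurrences just proved. Solving the first for $r^{n}$ gives $r^{n}=\widetilde{J}_{n+1}-s\widetilde{J}_{n}$, so $r^{n+1}=r\widetilde{J}_{n+1}-rs\widetilde{J}_{n}$; substituting this into the shifted relation $\widetilde{J}_{n+2}=s\widetilde{J}_{n+1}+r^{n+1}$ yields
\[
\widetilde{J}_{n+2}^{\left(\tfrac{1}{\widetilde{q}},\tfrac{r}{s}\right)}
=(s+r)\,\widetilde{J}_{n+1}^{\left(\tfrac{1}{\widetilde{q}},\tfrac{r}{s}\right)}
-rs\,\widetilde{J}_{n}^{\left(\tfrac{1}{\widetilde{q}},\tfrac{r}{s}\right)},
\]
with initial values $\widetilde{J}_{0}=0$ and $\widetilde{J}_{1}=1$ read off the closed form.

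As an independent check I would invoke the Corollary: because $\widetilde{J}_{n}^{(1/\widetilde{q}(s,-r),-r/s)}=J_{n}^{(1/q(s,r),r/s)}$, the recurrences of the present lemma must agree with those of the earlier lemma for $J_{n}$ after the substitution $r\mapsto -r$. This turns the coefficients $(s-r)$ and $rs$ appearing there into $(s+r)$ and $-rs$, matching exactly what the direct elimination produces. I do not anticipate any real difficulty; the only care required is sign bookkeeping when comparing with the Jacobsthal-proper results, and the non-vanishing of $s-r$ that the hypothesis $s>r$ guarantees.
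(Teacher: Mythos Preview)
Your proposal is correct and follows essentially the same approach the paper implicitly relies on: the paper states this lemma without writing out a proof, treating the recurrences as immediate consequences of the closed form $\widetilde{J}_{n}=(s^{n}-r^{n})/(s-r)$, which is exactly the direct-substitution verification you carry out. Your extra step of eliminating the inhomogeneous terms to recover the three-term recurrence, and the cross-check via $r\mapsto -r$ against the $J_{n}$ lemma, are sound and go slightly beyond what the paper bothers to spell out.
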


\begin{equation}
\widetilde{J}_{n}^{\left( \frac{1}{\widetilde{q}},\frac{r}{s}\right)
}=\left\{ 
\begin{array}{c}
0\text{ \ \ \ \ \ \ \ \ \ \ \ \ \ \ \ \ \ \ \ \ \ \ \ \ \ \ \ \ \ \ \ }n=0
\\ 
1\text{\ \ \ \ \ \ \ \ \ \ \ \ \ \ \ \ \ \ \ \ \ \ \ \ \ \ \ \ \ \ \ \ }n=1
\\ 
\left( s+r\right) \widetilde{J}_{n-1}^{\left( \frac{1}{\widetilde{q}},\frac{r%
}{s}\right) }-\left( rs\right) \widetilde{J}_{n-2}^{\left( \widetilde{q},%
\frac{r}{s}\right) }\text{ \ \ \ \ \ \ \ }n>1%
\end{array}%
\right.  \label{eqdualjn3rec}
\end{equation}%
The recurrence formula in (\ref{eqdualjn3rec}) is a special case of \textbf{%
Lucas-}sequences of the first kind $U_{n}(P,Q)$ defined by the recursion
relation

\begin{eqnarray}
U_{0}(P,Q) &=&0  \notag \\
U_{1}(P,Q) &=&1  \notag \\
U_{n}(P,Q) &=&PU_{n-1}(P,Q)-QU_{n-2}(P,Q)  \label{eqgwnrejatoours-1}
\end{eqnarray}%
whose generating function is given by 
\begin{equation}
\sum\limits_{n=0}^{\infty }U_{n}(P,Q)z^{n}=\frac{z}{1-Pz+Qz^{2}}
\label{eqgwnrejatoours-1D}
\end{equation}

\begin{theorem}
The generating function for the more-generalized $q-$Jacobsthal-like
numbers, $\widetilde{J}_{n}^{\left( \frac{1}{\widetilde{q}},\frac{r}{s}%
\right) }$, is given by%
\begin{equation}
\widetilde{J}^{\left( \frac{1}{\widetilde{q}},\frac{r}{s}\right) }\left(
z\right) =\sum\limits_{n=0}^{\infty }\widetilde{J}_{n}^{\left( \frac{1}{%
\widetilde{q}},\frac{r}{s}\right) }z^{n}=\frac{z}{1-\left( s+r\right)
z+srz^{2}}
\end{equation}
\end{theorem}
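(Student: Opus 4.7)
The plan is to mirror the strategy of the earlier theorem for $J_n^{(1/q,r/s)}$: recognize the three-term recurrence satisfied by $\widetilde{J}_n^{(1/\widetilde{q},r/s)}$ as a Lucas sequence of the first kind, and then invoke the known generating function (\ref{eqgwnrejatoours-1D}). Reading off (\ref{eqdualjn3rec}), the sequence $\widetilde{J}_n^{(1/\widetilde{q},r/s)}$ satisfies $\widetilde{J}_n = (s+r)\widetilde{J}_{n-1} - (rs)\widetilde{J}_{n-2}$ with initial values $\widetilde{J}_0 = 0$ and $\widetilde{J}_1 = 1$, which is precisely $U_n(P,Q)$ from (\ref{eqgwnrejatoours-1}) under the identification $P = s+r$ and $Q = rs$. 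Substituting these values into (\ref{eqgwnrejatoours-1D}) produces the denominator $1 - (s+r)z + srz^2$ and delivers the claimed formula in one step.

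If I wanted a self-contained derivation rather than quoting the Lucas-sequence generating function, I would set $G(z) = \sum_{n=0}^{\infty} \widetilde{J}_n^{(1/\widetilde{q},r/s)} z^n$, multiply through by the proposed denominator $1 - (s+r)z + srz^2$, and collect coefficients of $z^n$. The $z^0$ term yields $\widetilde{J}_0 = 0$, the $z^1$ term yields $\widetilde{J}_1 - (s+r)\widetilde{J}_0 = 1$, and for $n \geq 2$ the coefficient is $\widetilde{J}_n - (s+r)\widetilde{J}_{n-1} + sr\,\widetilde{J}_{n-2}$, which vanishes by (\ref{eqdualjn3rec}). Hence $(1 - (s+r)z + srz^2)G(z) = z$ and the stated closed form follows upon dividing.

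There is no substantive obstacle. The only point worth flagging is the sign of $Q$: for the Jacobsthal-like numbers one has $Q = +rs$, whereas the earlier Jacobsthal theorem used $Q = -rs$, so the denominator becomes $1 - (s+r)z + srz^2$ rather than $1 - (s-r)z - rsz^2$. This matches the sign swap $r \to -r$ noted in the corollary preceding the theorem, which is a useful sanity check. If the recurrence (\ref{eqdualjn3rec}) itself is regarded as still requiring verification, I would first combine the two one-step recurrences in (\ref{eqdualjn2rec}) to eliminate the inhomogeneous $r^n$ and $s^n$ terms in favor of $\widetilde{J}_{n-1}$ and $\widetilde{J}_{n-2}$, a short algebraic manipulation which I would place as a preliminary remark before the Lucas-sequence identification.
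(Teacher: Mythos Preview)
Your primary approach is correct and essentially identical to the paper's: identify the recurrence (\ref{eqdualjn3rec}) as the Lucas sequence $U_n(P,Q)$ with $P=s+r$, $Q=rs$, and substitute into (\ref{eqgwnrejatoours-1D}). Your additional self-contained verification via multiplying by the denominator and collecting coefficients is a sound alternative, but it goes beyond what the paper does.
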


\begin{proof}
Since (\ref{eqdualjn3rec}) is a special case of (\ref{eqgwnrejatoours-1})
with $P=s+r$ and $Q=rs$, the truth of the above theorem follows at once by
making the substitution $P=s+r$ and $Q=rs$ in (\ref{eqgwnrejatoours-1D})
\end{proof}

Let us start with the recurrence in (\ref{eqdualjn3rec}) as our definition
of the more-generalized $q-$Jacobsthal-like numbers, $\widetilde{J}%
_{n}^{\left( \frac{1}{\widetilde{q}},\frac{r}{s}\right) }$. In this case the
characteristic equation associated with the recurrence in (\ref{eqdualjn3rec}%
) is%
\begin{equation}
t^{2}=\left( s+r\right) t-\left( rs\right) \text{ \ }
\end{equation}%
which has the following roots%
\begin{equation}
t_{1}=s\text{, \ \ }t_{2}=r\text{ \ }
\end{equation}%
Now $\widetilde{J}_{n}^{\left( \frac{1}{\widetilde{q}},\frac{r}{s}\right) }$
may be expressed as a linear combination of $t_{1}^{n}$ and $t_{2}^{n}$%
\begin{equation}
\widetilde{J}_{n}^{\left( \frac{1}{q},\frac{r}{s}\right)
}=c_{1}t_{1}^{n}+c_{2}t_{2}^{n}\text{ \ }
\end{equation}%
Using the initial values $\widetilde{J}_{0}^{\left( \frac{1}{\widetilde{q}},%
\frac{r}{s}\right) }=0$ and $\widetilde{J}_{1}^{\left( \frac{1}{\widetilde{q}%
},\frac{r}{s}\right) }=1$, we obtain the values of $c_{1}$ and $c_{2}$,%
\begin{equation}
c_{1}=c_{2}=\frac{1}{s-r}\text{ \ }
\end{equation}%
Thus we obtain 
\begin{equation}
\widetilde{J}_{n}^{\left( \frac{1}{q},\frac{r}{s}\right) }=\frac{s^{n}-r^{n}%
}{s-r}  \label{eqdeofnthJacNumberDual-1}
\end{equation}%
which is the same expression in (\ref{eqdeofnthJacNumberDual}). If we relax
the assumption that $r,s$ are positive integers, then this expression can
produce many of the well known sequences. For example if we let $2s=1+\sqrt{5%
}$ and $2r=1-\sqrt{5}$, then $\widetilde{q}=s-r=\sqrt{5}$, and equation (\ref%
{eqdeofnthJacNumberDual-1}) becomes%
\begin{equation}
\widetilde{J}_{n}^{\left( \frac{1}{\sqrt{5}},\frac{\left( 1-\sqrt{5}\right)
/2}{\left( 1+\sqrt{5}\right) /2}\right) }=\frac{1}{\sqrt{5}}\left[ \left( 
\frac{1+\sqrt{5}}{2}\right) ^{n}-\left( \frac{1-\sqrt{5}}{2}\right) ^{n}%
\right]
\end{equation}%
Computing the first few terms we get 
\begin{equation}
0,1,1,2,3,5,8,13,21,34,55,89,144,...
\end{equation}%
which is the \textbf{Fibonacci sequence}. Thus the Fibonacci sequence $F_{n}$%
\begin{equation}
F_{n}=\widetilde{J}_{n}^{\left( \frac{1}{\sqrt{5}},\frac{\left( 1-\sqrt{5}%
\right) /2}{\left( 1+\sqrt{5}\right) /2}\right) }
\end{equation}%
It is clear \ that the Fibonacci sequence is just a special case of our the
more-generalized $q-$Jacobsthal-like numbers, $\widetilde{J}_{n}^{\left( 
\frac{1}{\widetilde{q}},\frac{r}{s}\right) }$. Moreover substituting $2s=1+%
\sqrt{5}$ and $2r=1-\sqrt{5}$ in (\ref{eqdualjn3rec}), we obtain%
\begin{equation}
F_{n}=\left\{ 
\begin{array}{c}
0\text{ \ \ \ \ \ \ \ \ \ \ \ \ \ \ \ \ \ \ \ \ \ \ \ \ \ \ \ \ \ \ \ }n=0
\\ 
1\text{\ \ \ \ \ \ \ \ \ \ \ \ \ \ \ \ \ \ \ \ \ \ \ \ \ \ \ \ \ \ \ \ }n=1
\\ 
F_{n-1}+F_{n-2}\text{ \ \ \ \ \ \ \ }n>1%
\end{array}%
\right.
\end{equation}%
which is the recurrence equation for the Fibonacci numbers.

If we set $s=1+\sqrt{2}$ and $r=1-\sqrt{2}$, then $\widetilde{q}=s-r=2\sqrt{2%
}$, and the above expression becomes

\begin{equation}
\widetilde{J}_{n}^{\left( \frac{1}{2\sqrt{2}},\frac{\left( 1-\sqrt{2}\right) 
}{\left( 1+\sqrt{2}\right) }\right) }=\frac{1}{2\sqrt{2}}\left[ \left( 1+%
\sqrt{2}\right) ^{n}-\left( 1-\sqrt{2}\right) ^{n}\right]
\end{equation}%
Computing the first few terms we get

\begin{equation}
0,1,2,5,12,29,70,169,408,985,2378,5741,13860,33461,80782,...
\end{equation}%
which is Sloane \textbf{A000129}, also known as \textbf{Pell sequence}. To
be more certain let's derive the recurrence relation for Pell sequence. The
closed form for Pell numbers, $P_{n}$, reads\textbf{\ }%
\begin{equation}
P_{n}=\widetilde{J}_{n}^{\left( \frac{1}{2\sqrt{2}},\frac{\left( 1-\sqrt{2}%
\right) }{\left( 1+\sqrt{2}\right) }\right) }
\end{equation}%
\textbf{\ }If we set $s=1+\sqrt{2}$ and $r=1-\sqrt{2}$ in the recurrence
equation for $\widetilde{J}_{n}^{\left( \frac{1}{\widetilde{q}},\frac{r}{s}%
\right) }$ in (\ref{eqdualjn3rec}) and use the fact that $\widetilde{J}%
_{n}^{\left( \frac{1}{2\sqrt{2}},\frac{\left( 1-\sqrt{2}\right) }{\left( 1+%
\sqrt{2}\right) }\right) }=$ $P_{n}$, we obtain 
\begin{equation}
P_{n}=\left\{ 
\begin{array}{c}
\text{ \ \ \ \ }0\text{ \ \ \ \ \ \ \ \ \ \ \ \ \ \ \ \ \ \ \ \ \ \ \ \ \ }%
n=0 \\ 
\text{ \ \ \ \ }1\text{\ \ \ \ \ \ \ \ \ \ \ \ \ \ \ \ \ \ \ \ \ \ \ \ \ \ \ 
}n=1 \\ 
2\text{\ }P_{n-1}+P_{n-2}\text{ \ \ \ \ \ \ \ }n>1%
\end{array}%
\right.
\end{equation}%
which is precisely the recurrence equation for the Pell numbers. At this
point it is worth making the following remark. Although the recurrence
relations for the more-generalized $q-$Jacobsthal numbers and their $q-$%
Jacobsthal-like were derived for $s,r$ positive integers; they provide the
means for analytic continuations to non integral values of $s$ and $r$. We
will come to this point in some detail at the end of the paper.

\section{Continuation to negative value of $n$}

We can extend the definition of the more-generalized $q-$Jacobsthal-like
numbers to negative values of $n$. This we can do by assuming that the above
recursion relation in (\ref{eqdualjn3rec}) provides a continuation to
negative values of $n$. Thus for $n=1$, 
\begin{equation}
\left( rs\right) \widetilde{J}_{n-2}^{\left( \frac{1}{\widetilde{q}},\frac{r%
}{s}\right) }=\left( s+r\right) \widetilde{J}_{n-1}^{\left( \frac{1}{%
\widetilde{q}},\frac{r}{s}\right) }-\widetilde{J}_{n}^{\left( \frac{1}{%
\widetilde{q}},\frac{r}{s}\right) }\text{ \ \ \ \ }\ \text{\ }
\label{eqnancinofjactonen}
\end{equation}%
gives%
\begin{equation*}
\widetilde{J}_{-1}^{\left( \frac{1}{\widetilde{q}},\frac{r}{s}\right) }\ =-%
\frac{1}{rs}
\end{equation*}%
Now the values $J_{0}^{\widetilde{q}}$ and $J_{-1}^{\widetilde{q}}$ together
with equation (\ref{eqnancinofjactonen}) give 
\begin{equation}
\widetilde{J}_{-n}^{\left( \frac{1}{\widetilde{q}},\frac{r}{s}\right) }=-%
\frac{1}{\left( rs\right) ^{n}}\frac{s^{n}-r^{n}}{s-r}=-\frac{1}{\left(
rs\right) ^{n}}\widetilde{J}_{n}^{\left( \frac{1}{\widetilde{q}},\frac{r}{s}%
\right) }
\end{equation}%
for all $n\geqslant 0$. Thus we have established the following theorem.

\begin{theorem}
\bigskip For any nonnegative integer $n$, we have%
\begin{equation}
\widetilde{J}_{-n}^{\left( \frac{1}{\widetilde{q}},\frac{r}{s}\right) }=-%
\frac{1}{\left( rs\right) ^{n}}\frac{s^{n}-r^{n}}{s-r}
\label{eqdeofnthJacNumberNegativeDual}
\end{equation}
\end{theorem}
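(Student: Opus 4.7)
The plan is to use the extension of the recursion (\ref{eqdualjn3rec}) to negative indices, as the paragraph preceding the theorem suggests. Solving that three-term relation for the lowest-indexed term gives
\begin{equation*}
\widetilde{J}_{n-2}^{\left(\frac{1}{\widetilde{q}},\frac{r}{s}\right)}=\frac{s+r}{rs}\,\widetilde{J}_{n-1}^{\left(\frac{1}{\widetilde{q}},\frac{r}{s}\right)}-\frac{1}{rs}\,\widetilde{J}_{n}^{\left(\frac{1}{\widetilde{q}},\frac{r}{s}\right)},
\end{equation*}
which is well defined for every integer $n$ provided $rs\neq 0$. I would take this as the defining recursion for negative indices, seed it with $\widetilde{J}_{0}=0$ and $\widetilde{J}_{1}=1$, and compute $\widetilde{J}_{-1}^{\left(\frac{1}{\widetilde{q}},\frac{r}{s}\right)}=-1/(rs)$, which agrees with (\ref{eqdeofnthJacNumberNegativeDual}) at $n=1$; the case $n=0$ is immediate.

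With the two initial values in hand, the direct route is induction on $n\geqslant 1$: assume (\ref{eqdeofnthJacNumberNegativeDual}) holds at $-(n-1)$ and $-n$, substitute into the inverted recursion at the shifted index, and collect common factors of $(rs)^{-n}$ to recover the asserted formula at $-(n+1)$. The algebra is routine, and I would only carry it out carefully enough to display the telescoping of $(s+r)(s^{n}-r^{n})-(s-r)$-type terms into $s^{n+1}-r^{n+1}$.

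A cleaner route, which I actually prefer, bypasses the induction entirely. The characteristic polynomial $t^{2}-(s+r)t+rs=(t-s)(t-r)$ has roots $s$ and $r$, so every sequence of the form $\alpha s^{m}+\beta r^{m}$, with $m$ ranging over all integers, satisfies the recursion in both directions (since $rs\neq 0$). Because the recursion determines the doubly infinite sequence uniquely from any two consecutive values, and the closed form $\widetilde{J}_{m}^{\left(\frac{1}{\widetilde{q}},\frac{r}{s}\right)}=(s^{m}-r^{m})/(s-r)$ already agrees with the initial data at $m=0,1$, it suffices to evaluate that same closed form at $m=-n$:
\begin{equation*}
\frac{s^{-n}-r^{-n}}{s-r}=\frac{r^{n}-s^{n}}{(rs)^{n}(s-r)}=-\frac{1}{(rs)^{n}}\,\frac{s^{n}-r^{n}}{s-r},
\end{equation*}
which is (\ref{eqdeofnthJacNumberNegativeDual}).

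I do not anticipate any real obstacle; the only hypothesis to keep in view is $rs\neq 0$, which is guaranteed by the standing assumption $s>r>0$ (and remains valid under analytic continuation to non-integer $r,s$ so long as neither vanishes). The minor bookkeeping nuisance, if one writes out the inductive version in full, is keeping the superscripts $\left(\frac{1}{\widetilde{q}},\frac{r}{s}\right)$ straight; the characteristic-roots argument above avoids even that.
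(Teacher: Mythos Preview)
Your proposal is correct. Your first route---invert the three-term recursion (\ref{eqdualjn3rec}), compute $\widetilde{J}_{-1}^{\left(\frac{1}{\widetilde{q}},\frac{r}{s}\right)}=-1/(rs)$ from the initial data, then propagate by induction---is exactly what the paper does, except that the paper leaves the inductive step implicit (it simply asserts that the two seed values together with the backward recursion ``give'' the closed form). Your characteristic-roots argument is a genuinely different route: the paper never invokes the fact that $(s^{m}-r^{m})/(s-r)$ satisfies the recursion for \emph{all} integers $m$ once $rs\neq 0$, and instead treats the negative-index case as a separate computation. Your approach is cleaner and makes the result a one-line evaluation rather than a recursion to unwind; the paper's approach, on the other hand, stays closer to the recurrence-based spirit of the surrounding section and avoids any appeal (however mild) to uniqueness of solutions to linear recurrences over $\mathbb{Z}$.
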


\bigskip From equation (\ref{eqdeofnthJacNumberDual}) and (\ref%
{eqdeofnthJacNumberNegativeDual}) that 
\begin{equation}
\widetilde{J}_{-n}^{\left( \frac{1}{\widetilde{q}},\frac{r}{s}\right) }=-%
\frac{1}{\left( rs\right) ^{n}}\widetilde{J}_{n}^{\left( \frac{1}{\widetilde{%
q}},\frac{r}{s}\right) },\text{\ \ \ \ }n\geqslant 0  \label{eqmdualtnegmdu}
\end{equation}%
Using the fact $\widetilde{J}_{n}^{\left( \frac{1}{\widetilde{q}},\frac{r}{s}%
\right) }=s^{n}\widetilde{X}_{n}^{\left( \frac{1}{\widetilde{q}},\frac{r}{s}%
\right) }$ and (\ref{eqmdualtnegmdu}), we obtain

\begin{equation}
\widetilde{J}_{-n}^{\left( \frac{1}{\widetilde{q}},\frac{r}{s}\right)
}=-\left( \frac{1}{r}\right) ^{n}\widetilde{X}_{n}^{\left( \frac{1}{%
\widetilde{q}},\frac{r}{s}\right) },\text{ \ \ \ \ }n\geqslant 0
\end{equation}%
relating the $n$th more-generalized $q-$Jacobsthal-like numbers for negative 
$n$ to the $n$th partial sum in the expansion of $\left( 1/\left( s-r\right)
\right) $ in powers of $\left( r/s\right) $. We conclude this section with
following result.

\begin{lemma}
For any nonnegative integer $n$, the $n$th more-generalized $q-$%
Jacobsthal-like numbers satisfy the following recurrence relations
\end{lemma}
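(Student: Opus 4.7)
The plan is to establish the recurrence relations for $\widetilde{J}_{-n}^{\left(\frac{1}{\widetilde{q}},\frac{r}{s}\right)}$ by leveraging two results already in hand: the closed-form expression (\ref{eqdeofnthJacNumberNegativeDual}) and the identity (\ref{eqmdualtnegmdu}) that reads $\widetilde{J}_{-n} = -\frac{1}{(rs)^n}\widetilde{J}_n$. The overall strategy mirrors the earlier theorem on $J_{-n}^{\left(\frac{1}{q},\frac{r}{s}\right)}$, now transported to the Jacobsthal-like setting through the corollary $J_n^{\left(\frac{1}{q(s,r)},\frac{r}{s}\right)} = \widetilde{J}_n^{\left(\frac{1}{\widetilde{q}(s,-r)},\frac{-r}{s}\right)}$.

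First, I would prove the two-term recurrences by direct substitution of the closed form. Writing $\widetilde{J}_{-(n+1)}$ explicitly and forming the candidate combinations $\widetilde{J}_{-n} - r\widetilde{J}_{-(n+1)}$ and $\widetilde{J}_{-n} - s\widetilde{J}_{-(n+1)}$, the calculation collapses to the elementary telescoping identities
\begin{equation*}
s^{n+1}-r^{n+1} = s(s^n-r^n)+r^n(s-r), \qquad s^{n+1}-r^{n+1} = r(s^n-r^n)+s^n(s-r).
\end{equation*}
These two identities are the arithmetic backbone of the two two-term recurrences and immediately yield the expected inhomogeneous terms $1/s^{n+1}$ and $1/r^{n+1}$ (up to sign and parity factors), matching the structure of the analogous Theorem for $J_{-n}^{\left(\frac{1}{q},\frac{r}{s}\right)}$.

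Second, for the three-term recurrence I would begin from (\ref{eqdualjn3rec}) and solve it algebraically for $\widetilde{J}_{n-2}^{\left(\frac{1}{\widetilde{q}},\frac{r}{s}\right)}$, obtaining
\begin{equation*}
\widetilde{J}_{n-2}^{\left(\frac{1}{\widetilde{q}},\frac{r}{s}\right)} = \frac{s+r}{rs}\widetilde{J}_{n-1}^{\left(\frac{1}{\widetilde{q}},\frac{r}{s}\right)} - \frac{1}{rs}\widetilde{J}_{n}^{\left(\frac{1}{\widetilde{q}},\frac{r}{s}\right)}.
\end{equation*}
Relabeling indices so that the subscript becomes $-n$ then produces a three-term recursion purely in the negative-index values; the base cases $\widetilde{J}_{0}^{\left(\frac{1}{\widetilde{q}},\frac{r}{s}\right)} = 0$ and $\widetilde{J}_{-1}^{\left(\frac{1}{\widetilde{q}},\frac{r}{s}\right)} = -1/(rs)$, already computed in the preceding paragraph of the text, anchor the recursion and determine every subsequent term uniquely. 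As a cross-check, the same three-term relation can be verified by direct substitution of the closed form (\ref{eqdeofnthJacNumberNegativeDual}).

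The main obstacle is purely bookkeeping: keeping the sign and the factor $(rs)^{-n}$ straight when pivoting between the forward recurrence for $\widetilde{J}_n$ and the backward one for $\widetilde{J}_{-n}$. I would guard against sign slips by expressing every manipulation in terms of the single scalar quantity $s^n - r^n$, which propagates cleanly through both the two-term and three-term identities without introducing alternating-sign artifacts, and by reusing verbatim the argument template already vetted in the earlier $J_{-n}$ theorem.
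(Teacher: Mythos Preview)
Your proposal is correct and follows essentially the same approach as the paper, which states only that the recurrences ``follow by direct substitution of equations (\ref{eqmdualtnegmdu}) in (\ref{eqdualjn2rec}) and (\ref{eqdualjn3rec}).'' You supply more detail---working with the closed form for the two-term identities and solving (\ref{eqdualjn3rec}) for the lowest-index term before relabeling for the three-term relation---but the underlying idea is the same routine verification.
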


\begin{eqnarray}
\widetilde{J}_{-\left( n+1\right) }^{\left( \frac{1}{\widetilde{q}},\frac{r}{%
s}\right) } &=&\frac{1}{r}\widetilde{J}_{n}^{\left( \frac{1}{\widetilde{q}},%
\frac{r}{s}\right) }-\frac{1}{rs}\frac{1}{s^{n}}\text{\ \ \ \ \ \ \ \ \ \ \
\ \ \ \ \ \ \ \ \ }n\geqslant 0 \\
\widetilde{J}_{-\left( n+1\right) }^{\left( \frac{1}{\widetilde{q}},\frac{r}{%
s}\right) } &=&-\frac{1}{rs}\frac{1}{r^{n}}+\frac{1}{s}\widetilde{J}%
_{-n}^{\left( \frac{1}{\widetilde{q}},\frac{r}{s}\right) }\text{\ \ \ \ \ \
\ \ \ \ \ \ \ \ \ \ \ \ }n\geqslant 0
\end{eqnarray}%
\begin{equation}
\widetilde{J}_{-n}^{\left( \frac{1}{\widetilde{q}},\frac{r}{s}\right)
}=\left\{ 
\begin{array}{c}
0\text{ \ \ \ \ \ \ \ \ \ \ \ \ \ \ \ \ \ \ \ \ \ \ \ \ \ \ \ \ \ \ \ \ \ }%
n=0 \\ 
-\frac{1}{rs}\text{\ \ \ \ \ \ \ \ \ \ \ \ \ \ \ \ \ \ \ \ \ \ \ \ \ \ \ \ \
\ \ }n=1 \\ 
\frac{s+r}{rs}\widetilde{J}_{-\left( n-1\right) }^{\left( \frac{1}{%
\widetilde{q}},\frac{r}{s}\right) }-\frac{1}{rs}\widetilde{J}_{-\left(
n-2\right) }^{\left( \frac{1}{\widetilde{q}},\frac{r}{s}\right) }\text{ \ \
\ \ \ \ \ }n>1%
\end{array}%
\right.
\end{equation}%
The proof of the above recurrence relations follow by direct substitution of
equations (\ref{eqmdualtnegmdu}) in (\ref{eqdualjn2rec}) and (\ref%
{eqdualjn3rec}).

For $q=s-r=3-1$, using equations (\ref{eqdeofnthJacNumberDual}) and (\ref%
{eqdeofnthJacNumberNegativeDual}), we obtain the complete set of the
Jacobsthal-like numbers appearing in the partial sums of the expansion of $%
\left( 1/2\right) $ in powers of $\left( 1/3\right) $ 
\begin{equation}
...,-\frac{364}{729},-\frac{121}{243},-\frac{40}{81},-\frac{13}{27},-\frac{4%
}{9},-\frac{1}{3},0,1,4,13,40,121,364,...
\end{equation}

\section{One more expansion}

\bigskip

Consider the expansion of $1/5$ in powers of $(1/2)$. \ Choosing $w=1:1$,
our method yields the infinite series%
\begin{eqnarray}
\frac{1}{5} &=&X^{\left( 1/5\right) }=0+\left( \frac{1}{2}\right) -\left( 
\frac{1}{2}\right) ^{2}-\left( \frac{1}{2}\right) ^{3}+\left( \frac{1}{2}%
\right) ^{4}+\left( \frac{1}{2}\right) ^{5}-\left( \frac{1}{2}\right) ^{6} 
\notag \\
&&-\left( \frac{1}{2}\right) ^{7}+\left( \frac{1}{2}\right) ^{8}+\left( 
\frac{1}{2}\right) ^{9}-\left( \frac{1}{2}\right) ^{10}-\left( \frac{1}{2}%
\right) ^{11}+\left( \frac{1}{2}\right) ^{12}+...  \label{eqninfser1/5}
\end{eqnarray}%
whose partial sums $X_{n}^{\left( 1/5,1/2\right) }$ are given by%
\begin{eqnarray*}
&&0,\frac{1}{2},\frac{1}{4},\frac{1}{8},\frac{3}{16},\frac{7}{32},\frac{13}{%
64},\frac{25}{128},\frac{51}{256},\frac{103}{512},\frac{205}{1024},\frac{409%
}{2048},\frac{819}{4096} \\
&&,\frac{1639}{8192},\allowbreak \frac{3277}{16\,384},\frac{6553}{32\,768},%
\frac{13\,107}{65\,536},\frac{26\,215}{131\,072},...
\end{eqnarray*}%
It is now a simple matter to show that the partial sums $X_{n}^{\left(
1/5,1/2\right) }$ for this series are given by 
\begin{eqnarray}
X_{2n}^{\left( 1/5,1/2\right) } &=&\frac{1}{4^{n}}\frac{4^{n}-\left(
-1\right) ^{n}}{4+1},\text{ \ \ \ \ \ \ \ \ \ \ }n=0,1,2,3,...
\label{A015521Found} \\
X_{2n+1}^{\left( 1/5,1/2\right) } &=&\frac{1}{2\cdot 4^{n}}\frac{2\cdot
4^{n}+3\left( -1\right) ^{n}}{4+1},\text{ \ \ \ }n=0,1,2,3,...
\label{eqA102900}
\end{eqnarray}%
The sequences in equations (\ref{A015521Found}) and (\ref{eqA102900}) are
precisely the closed form for the sequences: Sloane \textbf{A015521 }and
Sloane\textbf{\ A102900}, respectively\textbf{.} The reader should recall
that the sequence \textbf{A015521 }was already obtained before by expanding $%
1/5$ in powers of $\left( 1/4\right) $, which of course can be obtained by
simply rearranging the terms in the convergent series for the expansion for $%
1/5$ in powers of $\left( 1/2\right) $ in (\ref{eqninfser1/5}) in the
following way%
\begin{eqnarray*}
\frac{1}{5} &=&X^{\left( 1/5\right) }=0+\left[ \left( \frac{1}{2}\right)
-\left( \frac{1}{2}\right) ^{2}\right] \left[ 1-\left( \frac{1}{2}\right)
^{2}+\left( \frac{1}{2}\right) ^{4}-\left( \frac{1}{2}\right) ^{6}+...\right]
\\
&=&\sum\limits_{n=1}^{\infty }\left( -1\right) ^{n+1}\left( \frac{1}{4}%
\right) ^{n}
\end{eqnarray*}%
which is the infinite series one obtained in (\ref{eqex1/5inpo1/4}). Yet a
different arrangement of the terms in the infinite terms in (\ref%
{eqninfser1/5}) leads to%
\begin{eqnarray*}
\frac{1}{5} &=&X^{\left( 1/5\right) }=0+\left[ \left( \frac{1}{2}\right)
-\left( \frac{1}{2}\right) ^{2}-\left( \frac{1}{2}\right) ^{3}+\left( \frac{1%
}{2}\right) ^{4}\right] \left[ 1+\left( \frac{1}{2}\right) ^{4}+\left( \frac{%
1}{2}\right) ^{8}+...\right] \\
&=&\allowbreak \sum\limits_{n=1}^{\infty }3\left( \frac{1}{16}\right) ^{n}
\end{eqnarray*}%
which is an expansion in powers of $\left( 1/16\right) $ whose $n$th partial
sum reads

\begin{equation}
X_{n}^{\left( 1/5\right) }=\frac{1}{5}-\frac{1}{5}\left( \frac{1}{16}\right)
^{n}=\frac{3}{\left( 16\right) ^{n}}\frac{\left( 16\right) ^{n}-1}{16-1},%
\text{ \ \ \ \ \ \ \ \ \ \ }n>0
\end{equation}%
The sequence defined by 
\begin{equation}
S_{n}\equiv \frac{\left( 16\right) ^{n}}{3}X_{n}^{\left( 1/5\right) }=\frac{%
\left( 16\right) ^{n}-1}{16-1},\text{ \ \ \ \ \ \ \ \ \ \ }n>0
\end{equation}%
is Sloane \textbf{A131865, }whose members are 
\begin{equation}
1,17,273,4369,69905,1118481,17895697,286331153,4581298449....
\end{equation}%
In this case $S_{n}$ satisfies the recursion relation%
\begin{eqnarray}
S_{1} &=&1  \notag \\
S_{n} &=&16S_{n-1}+1\text{\ \ \ \ \ \ \ \ \ \ }n>1
\end{eqnarray}%
The most interesting thing here is the form of the partial sums in (\ref%
{eqA102900}). Their form is different from the more-generalized $q-$%
Jacobsthal numbers and their $q-$Jacobsthal-like numbers. This form suggest
expanding our definition to a form where all the expansions considered so
far are just special cases. Hence,

\begin{definition}
For any $a,b$ complex and any nonnegative integer $n$, the $\mathcal{A}$
numbers are defined by
\end{definition}

\begin{equation}
\mathcal{A}_{n}^{\left\{ a,b,s,t\right\} }=\frac{a\cdot s^{n}+\left(
-1\right) ^{n}b\cdot t^{n}}{s+t},\text{ \ \ \ }n=0,1,2,3,...
\label{eqnMostambformula}
\end{equation}%
This formula reduces to the following: (a) to the Jacobsthal formula for $%
a=-b=t=1$ and $s=2$, (b) to the generalization obtained in ref. \cite{Ji
Young Choi} for $a=-b=1$ and $t=1$, (c) to the more-generalized $q-$%
Jacobsthal numbers for $a=-b=1$, $t=r$ and (d) to the more-generalized $q-$%
Jacobsthal-like numbers for $a=-b=1$ and $t=-r$.

\section{\protect\bigskip Continuation of $J$, $\ \protect\widetilde{J}$ and 
$\mathcal{A}$ to complex numbers}

So far in our derivation of the more generalized Jacobsthal numbers and
their Jacobsthal-like numbers we have assumed integer values for $r$ and $s$
in equations (\ref{eqmoregejacnumbr/r+s}) and (\ref{eqdeofnthJacNumberDual}) 
\begin{equation}
J_{n}^{\left( \frac{1}{q},\frac{r}{s}\right) }=\frac{\left( s\right)
^{n}-\left( -r\right) ^{n}}{s+r}
\end{equation}%
and%
\begin{equation}
\widetilde{J}_{n}^{\left( \frac{1}{\widetilde{q}},\frac{r}{s}\right) }=\frac{%
\left( s\right) ^{n}-\left( r\right) ^{n}}{s-r}
\end{equation}%
We can achieve continuation by simply assuming the validity of the above
equations for $r$ and $s$ complex (or by assuming that their recurrence
equations provide the continuation to complex values of $r,s$). Thus let $%
r=\mu $ and $s=\nu $, where now $\mu $ and $\nu $ can assume any complex
values. Thus our defining equations above may be written as 
\begin{equation}
J_{n}^{\left( \frac{1}{q},\frac{\mu }{\nu }\right) }=\frac{\left( \nu
\right) ^{n}-\left( -\mu \right) ^{n}}{\nu +\mu }  \label{eqgejsmore}
\end{equation}%
and%
\begin{equation}
\widetilde{J}_{n}^{\left( \frac{1}{\widetilde{q}},\frac{\mu }{\nu }\right) }=%
\frac{\left( \nu \right) ^{n}-\left( \mu \right) ^{n}}{\nu -\mu }
\label{eqgejs-like-more}
\end{equation}%
where $q=\mu +\nu $\textbf{.} Moreover, we can continue the value of $n$ to
all complex numbers by assuming that the above equations hold for $n$
complex or alternatively by assuming that their recurrence equations provide
the continuation to complex values of $n$. Thus letting $n=\lambda $, where $%
\lambda $ is now any complex number, and furthermore relaxing the assumption
that $q=\mu +\nu $, we define $j_{\lambda }^{\left( \mu ,\nu \right) }$ by 
\begin{equation}
j_{\lambda }^{\left( \mu ,\nu \right) }=\frac{\left( \nu \right) ^{\lambda
}-\left( \mu \right) ^{\lambda }}{\nu -\mu }  \label{eqcontJS}
\end{equation}%
We have seen before that for $2\nu =1+\sqrt{5},$ $2\mu =1-\sqrt{5}$ and $%
\lambda =n$ (nonnegative integer), equation (\ref{eqcontJS}) yields the 
\textbf{Fibonacci }sequence. For $\lambda =n$ (nonnegative integer), $\nu =1+%
\sqrt{2},$ $\mu =1-\sqrt{2}$, equation (\ref{eqcontJS}) yields\textbf{\ Pell 
}sequences. If we set $\nu =1/2$, $\mu =1/3$ and $\lambda =n$, the above
formula yields the following sequence%
\begin{equation}
0,1,5,19,65,211,665,2059,6305,19171,58025,175099,...
\end{equation}%
which is Sloane \textbf{A001047}. For $\lambda =n$ (nonnegative integer), $%
\nu =1+\sqrt{3},$ $\mu =1-\sqrt{3}$, equation (\ref{eqcontJS}) yields Sloane 
\textbf{A002605}:%
\begin{equation}
0,1,2,6,16,44,120,328,896,2448,6688,18272,49920,...
\end{equation}%
This form may be now used to compute $j_{\lambda }^{\left( \mu ,\nu \right)
} $ for complex values of $\mu $, $\nu $ and $\lambda $ and see if the
resulting numbers are of any interest.

We conclude this section by analytically continuing the formula in (\ref%
{eqnMostambformula}) into the complex plane. This we do by asserting that 
\begin{equation}
\mathcal{A}_{\lambda }^{\left\{ a,b,s,t\right\} }=\frac{a\cdot \nu ^{\lambda
}+\left( -1\right) ^{\lambda }b\cdot \gamma ^{\lambda }}{\nu +\gamma },
\label{eqnMostambformulacont}
\end{equation}%
where now $a,b,\nu ,\gamma ,\lambda $ may assume any complex values, provide
the analytic continuation of (\ref{eqnMostambformula}) when $s,t$, and $n$
assume complex values. Since (\ref{eqnMostambformulacont}) reduces to (\ref%
{eqnMostambformula}) for $\nu ,\gamma ,\lambda $ integers then by the
identity theorem of two functions it is the analytic continuation of (\ref%
{eqnMostambformula}).

\section{Extending the method to all numbers}

In fact our method can be used to expand all numbers, real (or complex) in
powers of $r/s$ with real (or complex) coefficients. For example the
irrational number $1/\pi $ can be expanded in powers of $r/s=1/2$ by simply
putting $\pi -1$ units of mass at the origin and $1$ unit of mass at the $%
x=1 $. Then the formula in (\ref{eqnhscoordn}) for the center of mass yields%
\begin{equation}
X=\frac{\left( \pi -1\right) \times 0+1\times 1}{\left( \pi -1\right) +1}=%
\frac{1}{\pi }
\end{equation}%
which is the desired irrational number to expand. Choosing $w=1:1$, our
iterative method results in an infinite series in powers of $1/2$ 
\begin{eqnarray}
\frac{1}{\pi } &=&X^{\left( 1/\pi \right) }=0+\left( \frac{1}{2}\right)
-\left( \frac{1}{2}\right) ^{2}+\left( \frac{1}{2}\right) ^{3}-\left( \frac{1%
}{2}\right) ^{4}+\left( \frac{1}{2}\right) ^{5}-\left( \frac{1}{2}\right)
^{6}  \notag \\
&&-\left( \frac{1}{2}\right) ^{7}-\left( \frac{1}{2}\right) ^{8}+\left( 
\frac{1}{2}\right) ^{9}-\left( \frac{1}{2}\right) ^{10}+\left( \frac{1}{2}%
\right) ^{11}+\left( \frac{1}{2}\right) ^{12}  \notag \\
&&+\left( \frac{1}{2}\right) ^{13}+\left( \frac{1}{2}\right) ^{14}+\left( 
\frac{1}{2}\right) ^{15}+\cdot \cdot
\end{eqnarray}%
It is not hard to see that this is very fast convergent series\footnote{%
The easiest way to check that the above series converges to $1/\pi $ \ very
fast and obtain the desired level of accuracy to the true value of $\pi $ is
to write a computer program to evaluate the partial sums using the CM method.%
}. Let us compute the first few partial sums $X_{n}^{\left( 1/\pi
,1/2\right) }$. For $n=0,1,2,...15,...$, we have%
\begin{eqnarray}
X_{n}^{\left( 1/\pi ,1/2\right) } &=&0,\frac{1}{2},\frac{1}{4},\frac{3}{8},%
\frac{5}{16},\frac{11}{32},\frac{21}{64},\frac{41}{128},\frac{81}{256},\frac{%
163}{512},  \notag \\
&&\frac{325}{1024},\frac{651}{2048},\frac{1303}{4096},\frac{2607}{8192},%
\frac{5215}{16\,384},\frac{10\,431}{32\,768},...
\end{eqnarray}%
Now if we compute the difference $X_{n}^{\left( 1/\pi ,1/2\right) }-1/\pi $,
we see that it tends to zero very fast%
\begin{eqnarray*}
X_{0}^{\left( 1/\pi ,1/2\right) }-\frac{1}{\pi } &\simeq &-0.3183 \\
X_{1}^{\left( 1/\pi ,1/2\right) }-\frac{1}{\pi } &\simeq &\allowbreak
0.181\,69 \\
X_{2}^{\left( 1/\pi ,1/2\right) }-\frac{1}{\pi } &\simeq &-0.0683 \\
X_{4}^{\left( 1/\pi ,1/2\right) }-\frac{1}{\pi } &\simeq &-0.0058 \\
X_{15}^{\left( 1/\pi ,1/2\right) }-\frac{1}{\pi } &\simeq &0.00001
\end{eqnarray*}%
Unfortunately, I have not managed to get a closed form for the $n$th partial
sum, $X_{n}^{\left( 1/\pi ,1/2\right) }$, and I doubt if any exist.

Likewise, the expansion of $\pi $ can be obtained by considering for example 
$q$ units of mass (with $q>\pi $) and dividing $q$ into two groups, $q-\pi $
units of mass and $\pi $ units of mass, and choosing the initial
configurations such that the $q-\pi $ units of mass are placed at $x=0$ and
the $\pi $ units of mass are placed at $x=1$. In this case our formula for
the center of mass in (\ref{eqnhscoordn}) yields%
\begin{equation}
X^{\left( \pi /q\right) }=\frac{\left( q-\pi \right) \times 0+\pi \times 1}{%
\left( q-\pi \right) +\pi }=\frac{\pi }{q}
\end{equation}%
or%
\begin{equation}
\pi =qX^{\left( \pi /q\right) }
\end{equation}%
Since our method always yields an infinite series for $X^{\left( \pi
/q\right) }$ (the center of mass) in powers of $\left( 1/2\right) $; our
method is guaranteed to give an expansion of $\pi $ in powers of $\left(
1/2\right) $. An expansion of $\pi $ is given in the appendix C.

We clearly established that our iterative method for computing the center of
mass for $q$ units of mass on the unit interval provides us with a definite
procedure for expanding all numbers in powers of $r/s$, with $r,s$ integers
and $s>r>0$\footnote{%
In fact $r/s$ need not be a rational number. However, the procedure provided
by the CM method is then more involved.}. The results we have obtained so
far follow from the following theorem:

\begin{theorem}
Every number real (or complex) has an infinite series expansion in powers of 
$r/s$\ with real (or complex) coefficients, for some rational number $r/s$%
\thinspace $<1$. The proof of the theorem follows at once from the CM method.
\end{theorem}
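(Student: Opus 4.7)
The plan is to reduce the claim to the case of a number of modulus less than unity and then invoke the CM construction together with a geometric-type convergence estimate. First, given a real or complex $N$, choose a positive integer $q$ with $q>|N|$, so that $X:=N/q$ satisfies $|X|<1$; writing $N=qX$ shifts the task to producing a power-series expansion of $X$, and the factor $q$ may be absorbed into the coefficients at the end. Exactly the same reduction is used in Section 9 to expand $1/\pi$ and $\pi$, so this step merely codifies what is already done by example in the paper.

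Next, set up the center-of-mass configuration of Section 1, but with the footnoted relaxation that the masses need not be positive integers: place a ``mass'' of $q-N$ at $x=0$ and a mass of $N$ at $x=1$, so that formula (\ref{eqnhscoordn}) gives $x_{cm}=N/q=X$. Given a target rational $r/s<1$, apply the iterative procedure with iterative weight $w=r:(s-r)$, the weight that in the worked examples (culminating in (\ref{eqmoregeAbdnumbr/r+s})) produces expansions in powers of $r/s$. Each iteration then contributes a correction term of the form $\pm c_n(r/s)^n$ with coefficients $c_n$ that are bounded uniformly in $n$, being determined algebraically by the fixed weight.

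Convergence of the resulting series is then automatic: since $|r/s|<1$ and the $c_n$ are bounded, the partial sums form a Cauchy sequence by comparison with a geometric series, and their limit equals the center of mass $X=N/q$ by construction. Multiplying through by $q$ delivers the desired expansion of $N$ in powers of $r/s$. The complex case is handled either by invoking the linearity of (\ref{eqnhscoordn}) in the masses (so that complex ``masses'' are admissible formally) or by splitting $N$ into real and imaginary parts and applying the real version to each.

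The main obstacle is not the convergence but the justification of the setup once the masses are no longer positive integers. One must verify (i) that the bookkeeping ``take the smaller group and combine it with an equal portion from the larger group'' still makes sense when the two groups carry arbitrary real or complex weight, and (ii) that identifying a ``larger collection'' at each iteration can be dispensed with (or replaced by a sign convention, for instance via the shift function $S(n)$ mentioned in a footnote of Section 2). Both difficulties are resolved by reading the CM procedure not as a physical splitting of mass but as a purely algebraic recursion on the partial sums $X_n$, of the type already written down in (\ref{eqmoregeAbdnumbr/r+s}); once that reinterpretation is in place, the theorem follows as the author asserts.
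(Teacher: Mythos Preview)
Your proposal is correct and follows the same approach as the paper. Indeed, the paper offers no proof beyond the single sentence already embedded in the theorem statement (``The proof of the theorem follows at once from the CM method''); your write-up supplies the reduction to $|X|<1$ via $N=qX$, the geometric convergence estimate, and the discussion of how to interpret the procedure for non-integer and complex masses---all of which the paper leaves implicit in the worked examples of Section~9.
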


\section{Conclusions}

\bigskip We have presented a way to expand rational numbers as infinite
series in powers of rational numbers less than unity and have showed that
partial sums associated with various expansions are related to special
numbers appearing in mathematics such as Jacobsthall numbers and the
generalized Fibonacci sequence and provided a generalization to the
Jacobsthall numbers. The generalization of the Jacobsthall numbers lead us
to what we called the "more generalized $q-$Jacobsthal numbers" and the
"more-generalized $q-$ Jacobsthall-like numbers". Moreover, we analytically
continued our results to negative integers which provided a completeness
that was missing in the usual theory of Jacobsthall numbers. We concluded
the first part of the paper by analytically continued all the parameters of
our results to the entire complex plane. The analytically continuing results
gave rise to Fibonacci sequence, Pell numbers, among other sequences of
interest to mathematicians and scientists. We concluded our paper by giving
an expansion of $\pi $ in powers of $\left( 1/2\right) $ and noted that the
series gives a good approximation of $\pi $ with only the first few terms
considered. Our systems considered here exhibit the behavior of a quantized
harmonic oscillator a topic that is addressed in a sequel to this paper.

\bigskip

\appendix

\bigskip

\section{\protect\bigskip The extension of known identities to our numbers}

Here we present some of the identity satisfied by our numbers and outline
their proofs.

\begin{theorem}
(generalized Catalan'sidentity) For any nonnegative integers $s,r$ (with $%
s>r $), $m$ and $n$ (with $n>m$), we have%
\begin{equation}
\widetilde{J}_{n-m}^{\left( \frac{1}{\widetilde{q}},\frac{r}{s}\right) }%
\widetilde{J}_{n+m}^{\left( \frac{1}{\widetilde{q}},\frac{r}{s}\right) }-%
\widetilde{J}_{n}^{\left( \frac{1}{\widetilde{q}},\frac{r}{s}\right)
}=-\left( sr\right) ^{n-m}\widetilde{J}_{m}^{\left( \frac{1}{\widetilde{q}},%
\frac{r}{s}\right) }
\end{equation}
\end{theorem}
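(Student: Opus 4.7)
The plan is to prove this by direct substitution using the closed-form (Binet-type) expression for $\widetilde{J}_{n}^{\left(\frac{1}{\widetilde{q}},\frac{r}{s}\right)}$ established in equation (\ref{eqdeofnthJacNumberDual-1}), namely
\[
\widetilde{J}_{k}^{\left(\frac{1}{\widetilde{q}},\frac{r}{s}\right)} = \frac{s^{k}-r^{k}}{s-r}.
\]
(I note in passing that the statement as written appears to be missing squares: the classical Catalan identity has $\widetilde{J}_n^2$ and $\widetilde{J}_m^2$ rather than $\widetilde{J}_n$ and $\widetilde{J}_m$, and my plan produces that corrected version, which is consistent with the Fibonacci-Catalan identity recovered in the special case $s=(1+\sqrt{5})/2$, $r=(1-\sqrt{5})/2$.)

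First I would substitute the Binet formula into the product $\widetilde{J}_{n-m}\widetilde{J}_{n+m}$, obtaining
\[
\widetilde{J}_{n-m}\widetilde{J}_{n+m} = \frac{(s^{n-m}-r^{n-m})(s^{n+m}-r^{n+m})}{(s-r)^2} = \frac{s^{2n} - s^{n-m}r^{n+m} - s^{n+m}r^{n-m} + r^{2n}}{(s-r)^2},
\]
and similarly expand
\[
\widetilde{J}_{n}^2 = \frac{s^{2n} - 2 s^n r^n + r^{2n}}{(s-r)^2}.
\]
Second, I would subtract, observing that the $s^{2n}$ and $r^{2n}$ terms cancel, leaving
\[
\widetilde{J}_{n-m}\widetilde{J}_{n+m} - \widetilde{J}_{n}^2 = \frac{2 s^n r^n - s^{n-m}r^{n+m} - s^{n+m}r^{n-m}}{(s-r)^2}.
\]
Third, I would factor $(sr)^{n-m}$ out of the numerator and recognize the remaining bracket as $-(s^m - r^m)^2$, so that the right-hand side becomes $-(sr)^{n-m}\,\widetilde{J}_m^2$, as desired.

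There is no real obstacle here; the proof is essentially a one-line algebraic identity once the Binet form is in hand, and the nonnegativity/integrality of $s,r,m,n$ is only needed to ensure the exponents $n-m$ and $n+m$ are nonnegative so that the closed form applies termwise. An alternative route would be induction on $m$ using the recurrence (\ref{eqdualjn3rec}), but this is significantly longer and offers no additional insight, so I would stick with the direct Binet computation. If desired, the same computation could then be recycled (via the analytic continuation $r \mapsto -r$ noted in the Corollary) to give the analogous Catalan identity for $J_n^{\left(\frac{1}{q},\frac{r}{s}\right)}$.
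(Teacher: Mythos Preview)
Your approach is correct and essentially identical to the paper's: both substitute the Binet-type closed form $\widetilde{J}_k = (s^k - r^k)/(s-r)$ and expand the product directly to obtain $-(sr)^{n-m}\widetilde{J}_m^2$. You are also right about the missing squares --- the paper's own proof in fact computes $\widetilde{J}_{n-m}\widetilde{J}_{n+m} - \widetilde{J}_n^{\,2}$ and lands on $-(sr)^{n-m}\widetilde{J}_m^{\,2}$, so the unsquared terms in the stated identity are typos.
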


To see the truth of the above theorem, we substitute (\ref%
{eqdeofnthJacNumberDual}) into the LHS of the above equation%
\begin{eqnarray*}
&&\widetilde{J}_{n-m}^{\left( \frac{1}{\widetilde{q}},\frac{r}{s}\right) }%
\widetilde{J}_{n+m}^{\left( \frac{1}{\widetilde{q}},\frac{r}{s}\right) }-%
\widetilde{J}_{n}^{\left( \frac{1}{\widetilde{q}},\frac{r}{s}\right) } \\
&=&\left( \frac{s^{n-m}-r^{n-m}}{s-r}\right) \left( \frac{s^{n+m}-r^{n+m}}{%
s-r}\right) -\left( \frac{s^{n}-r^{n}}{s-r}\right) ^{2} \\
&=&-\left( sr\right) ^{n-m}\left( \frac{s^{m}-r^{m}}{s-r}\right)
^{2}=-\left( sr\right) ^{n-m}\widetilde{J}_{m}^{\left( \frac{1}{\widetilde{q}%
},\frac{r}{s}\right) }
\end{eqnarray*}%
Now the generalized Catalan's identity for the $n-$th more-generalized $q-$%
Jacobsthall numbers, $J_{n}^{\left( \frac{1}{q},\frac{r}{s}\right) }$,
follows from the above identity with the replacement of $r$ by $-r$. Thus we
have the following corollary.

\begin{corollary}
(generalized Catalan's identity) For any nonnegative integers $s,r$ (with $%
s>r$)$,m$ and $n$ (with $n>m$), we have%
\begin{equation}
J_{n-m}^{\left( \frac{1}{q},\frac{r}{s}\right) }J_{n+m}^{\left( \frac{1}{q},%
\frac{r}{s}\right) }-J_{n}^{\left( \frac{1}{q},\frac{r}{s}\right) }=-\left(
-sr\right) ^{n-l}J_{m}^{\left( \frac{1}{q},\frac{r}{s}\right) }
\end{equation}
\end{corollary}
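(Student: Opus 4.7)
The plan is to derive the identity directly from the generalized Catalan identity for the $\widetilde{J}$-numbers that was just proved, by exploiting the relation between the two families of numbers established earlier in the paper.

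First I would recall the corollary
$J_{n}^{\left( \frac{1}{q(s,r)},\frac{r}{s}\right) }=\widetilde{J}_{n}^{\left( \frac{1}{\widetilde{q}(s,-r)},\frac{-r}{s}\right) },$
which expresses each more-generalized $q$-Jacobsthal number as a more-generalized $q$-Jacobsthal-like number evaluated with $r$ replaced by $-r$. Since this substitution changes only the parameter $r$ and not the index $n$, it is compatible with any polynomial identity that the $\widetilde{J}_{n}$ satisfy in the three quantities $\widetilde{J}_{n-m},\widetilde{J}_{n+m},\widetilde{J}_{n}$.

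Next I would apply the substitution $r\mapsto -r$ to the generalized Catalan identity
$\widetilde{J}_{n-m}^{\left( \frac{1}{\widetilde{q}},\frac{r}{s}\right) }\widetilde{J}_{n+m}^{\left( \frac{1}{\widetilde{q}},\frac{r}{s}\right) }-\bigl(\widetilde{J}_{n}^{\left( \frac{1}{\widetilde{q}},\frac{r}{s}\right) }\bigr)^{2}=-(sr)^{n-m}\bigl(\widetilde{J}_{m}^{\left( \frac{1}{\widetilde{q}},\frac{r}{s}\right) }\bigr)^{2}.$
The left-hand side is transformed term-by-term into the corresponding expression in $J_{n-m},J_{n+m},J_{n}$ via the corollary, and on the right-hand side the factor $(sr)^{n-m}$ picks up the sign $(-1)^{n-m}$, becoming $(-sr)^{n-m}$, while the squared factor is converted analogously to $\bigl(J_{m}^{\left( \frac{1}{q},\frac{r}{s}\right) }\bigr)^{2}$. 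Collecting these substitutions yields the desired identity.

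As a verification (and backup in case the reader prefers a self-contained argument) I would substitute the closed form $J_{n}^{\left( \frac{1}{q},\frac{r}{s}\right) }=\frac{s^{n}-(-r)^{n}}{s+r}$ into the left-hand side, expand the product, and factor the resulting numerator as
$-s^{n-m}(-r)^{n-m}\bigl(s^{m}-(-r)^{m}\bigr)^{2}=-(-sr)^{n-m}\bigl(s^{m}-(-r)^{m}\bigr)^{2},$
which after dividing by $(s+r)^{2}$ reproduces $-(-sr)^{n-m}\bigl(J_{m}^{\left( \frac{1}{q},\frac{r}{s}\right) }\bigr)^{2}$. There is no real obstacle here: the whole argument is bookkeeping, and the only point requiring care is tracking the sign $(-1)^{n-m}$ that emerges from $r\mapsto -r$ in $(sr)^{n-m}$, which is precisely what promotes the factor on the right from $-(sr)^{n-m}$ to $-(-sr)^{n-m}$.
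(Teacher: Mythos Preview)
Your proposal is correct and follows essentially the same route as the paper: the paper simply remarks that the identity for $J_{n}^{(\frac{1}{q},\frac{r}{s})}$ follows from the just-proved identity for $\widetilde{J}_{n}^{(\frac{1}{\widetilde{q}},\frac{r}{s})}$ by replacing $r$ with $-r$, which is exactly your main argument. Your added direct verification via the closed form is a harmless bonus, and you were right to read the stated identity with the missing squares on $J_{n}$ and $J_{m}$ (and with $n-m$ in the exponent), since that is what the paper's own computation actually establishes.
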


The next theorem we prove is the convolution identity.

\begin{theorem}
(generalized convolution identity) For any nonnegative integers $s,r$ (with $%
s>r$),$n$ and $m$, we have
\end{theorem}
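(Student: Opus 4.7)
The plan is to mirror the strategy used for the generalized Catalan identity above: work directly from the Binet-type closed form $\widetilde{J}_{k}^{\left( \frac{1}{\widetilde{q}},\frac{r}{s}\right) } = (s^{k}-r^{k})/(s-r)$ established in (\ref{eqdeofnthJacNumberDual}), substitute it into every $\widetilde{J}$ that appears on each side, and then reduce to a routine polynomial identity in $s$ and $r$. This bypasses induction entirely and, for identities of the typical convolution shape $\widetilde{J}_{n+m+1} = \widetilde{J}_{n+1}\widetilde{J}_{m+1} - sr\,\widetilde{J}_{n}\widetilde{J}_{m}$ (or a summatory variant $\sum_{k} \widetilde{J}_{k}\widetilde{J}_{n-k} = \cdots$), the verification collapses to recognizing a telescoping of the cross terms $s^{a}r^{b}$.

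Concretely, I would first clear the common denominator $(s-r)^{2}$ from the right-hand side so that both sides are expressed as polynomials divided by $(s-r)$ or $(s-r)^{2}$. Expanding the product $(s^{n+1}-r^{n+1})(s^{m+1}-r^{m+1})$ and $(s^{n}-r^{n})(s^{m}-r^{m})$ yields four monomials each, in which the pure terms $s^{n+m+2}$ and $r^{n+m+2}$ and the symmetric cross terms $s^{n+1}r^{m+1}+s^{m+1}r^{n+1}$ appear, together with their counterparts weighted by $sr$. The key algebraic observation is the factorization
\begin{equation*}
s^{a+1}r^{b+1} - sr\cdot s^{a}r^{b} = 0,
\end{equation*}
which kills the unwanted cross contributions and leaves exactly $(s^{n+m+1}-r^{n+m+1})(s-r)$ in the numerator, producing $\widetilde{J}_{n+m+1}^{\left( \frac{1}{\widetilde{q}},\frac{r}{s}\right) }$ after dividing by $(s-r)$. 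For a summatory convolution identity, I would instead recognize $\sum_{k=0}^{n} s^{k}r^{n-k}$ as a geometric sum $(s^{n+1}-r^{n+1})/(s-r)$ and reduce the problem to the same polynomial manipulation.

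The main obstacle I expect is bookkeeping rather than substance: the terms $s^{a}r^{b}$ with $a+b$ constant must be matched carefully between the two sides, and the sign conventions inherited from $Q = sr$ (as opposed to $Q = -sr$ for the $J_{n}$ version) have to be tracked to avoid an off-by-sign error. Once the identity for $\widetilde{J}$ is verified, the corresponding identity for the more-generalized $q$-Jacobsthal numbers $J_{n}^{\left( \frac{1}{q},\frac{r}{s}\right) }$ will follow, exactly as in the Catalan case, by the substitution $r \mapsto -r$, since $J_{n} = \widetilde{J}_{n}\bigr|_{r\to -r}$ according to the corollary after equation (\ref{eqdeofnthJacNumberDual}). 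Thus only one algebraic verification is required, and its dual is automatic.
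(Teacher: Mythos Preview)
Your approach is correct and matches the paper's own proof: both substitute the closed form $\widetilde{J}_{k}=(s^{k}-r^{k})/(s-r)$ into the right-hand side, expand, observe that the cross terms $s^{a}r^{b}$ cancel, and recover $(s^{n+m}-r^{n+m})/(s-r)$. The only cosmetic difference is that the paper states the identity as $\widetilde{J}_{n+m}=\widetilde{J}_{n+1}\widetilde{J}_{m}-sr\,\widetilde{J}_{n}\widetilde{J}_{m-1}$, which is your version with $m\mapsto m-1$; the substitution $r\mapsto -r$ to obtain the $J_{n}$ corollary is exactly what the paper does as well.
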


\begin{equation}
\widetilde{J}_{n+m}^{\left( \frac{1}{\widetilde{q}},\frac{r}{s}\right) }=%
\widetilde{J}_{n+1}^{\left( \frac{1}{\widetilde{q}},\frac{r}{s}\right) }%
\widetilde{J}_{m}^{\left( \frac{1}{\widetilde{q}},\frac{r}{s}\right) }-sr%
\widetilde{J}_{n}^{\left( \frac{1}{\widetilde{q}},\frac{r}{s}\right) }%
\widetilde{J}_{m-1}^{\left( \frac{1}{\widetilde{q}},\frac{r}{s}\right) }
\end{equation}%
To prove this identity consider%
\begin{eqnarray*}
&&\widetilde{J}_{n+1}^{\left( \frac{1}{\widetilde{q}},\frac{r}{s}\right) }%
\widetilde{J}_{m}^{\left( \frac{1}{\widetilde{q}},\frac{r}{s}\right) }-sr%
\widetilde{J}_{n}^{\left( \frac{1}{\widetilde{q}},\frac{r}{s}\right) }%
\widetilde{J}_{m-1}^{\left( \frac{1}{\widetilde{q}},\frac{r}{s}\right) } \\
&=&\left( \frac{s^{n+1}-r^{n+1}}{s-r}\right) \left( \frac{s^{m}-r^{m}}{s-r}%
\right) -sr\left( \frac{s^{n}-r^{n}}{s-r}\right) \left( \frac{s^{m-1}-r^{m-1}%
}{s-r}\right) \\
&=&\left( \frac{s^{n+m}-r^{n+m}}{s-r}\right) =\widetilde{J}_{n+m}^{\left( 
\frac{1}{\widetilde{q}},\frac{r}{s}\right) }
\end{eqnarray*}%
The convolution identity for the $n-$th more generalized $q-$Jacobsthall
numbers, $J_{n}^{\left( \frac{1}{q},\frac{r}{s}\right) }$, follows from the
above identity with the replacement of $r$ by $-r$. Thus we have the
following corollary (convolution identity).

\begin{corollary}
(convolution identity) For any nonnegative integers $s,r$ (with $s>r$), $n$
and $m$, we have%
\begin{equation}
J_{n+m}^{\left( \frac{1}{q},\frac{r}{s}\right) }=J_{n+1}^{\left( \frac{1}{q},%
\frac{r}{s}\right) }J_{m}^{\left( \frac{1}{q},\frac{r}{s}\right)
}+srJ_{n}^{\left( \frac{1}{q},\frac{r}{s}\right) }J_{m-1}^{\left( \frac{1}{q}%
,\frac{r}{s}\right) }
\end{equation}
\end{corollary}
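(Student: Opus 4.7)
The plan is to derive the corollary directly from the generalized convolution identity for $\widetilde{J}_n^{\left(\frac{1}{\widetilde{q}},\frac{r}{s}\right)}$ that was just proved, by using the symmetry already recorded in the earlier corollary that linked the two families of numbers. Specifically, recall that
\begin{equation*}
\widetilde{J}_n^{\left(\frac{1}{\widetilde{q}(s,r)},\frac{r}{s}\right)}=\frac{s^n-r^n}{s-r}
\quad\text{and}\quad
J_n^{\left(\frac{1}{q(s,r)},\frac{r}{s}\right)}=\frac{s^n-(-r)^n}{s+r},
\end{equation*}
so the two sequences are related by the formal replacement $r\mapsto -r$.

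My first step would be to write out the identity proved in the preceding theorem with $r$ replaced by $-r$ everywhere. On the left side, $\widetilde{J}_{n+m}^{\left(\frac{1}{\widetilde{q}},\frac{r}{s}\right)}$ becomes $J_{n+m}^{\left(\frac{1}{q},\frac{r}{s}\right)}$ by the formula above. The same substitution converts each $\widetilde{J}$ factor on the right side into the corresponding $J$. The crucial book-keeping is that the coefficient $-sr$ transforms into $-s(-r)=+sr$, which is exactly the sign flip that distinguishes the corollary from the theorem.

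If any referee were to prefer a self-contained argument, I would alternatively reproduce the short algebraic computation from the proof of the theorem, this time starting from the closed form $J_n^{\left(\frac{1}{q},\frac{r}{s}\right)}=\frac{s^n-(-r)^n}{s+r}$. The expansion
\begin{equation*}
J_{n+1}^{\left(\frac{1}{q},\frac{r}{s}\right)}J_m^{\left(\frac{1}{q},\frac{r}{s}\right)}
+sr\,J_n^{\left(\frac{1}{q},\frac{r}{s}\right)}J_{m-1}^{\left(\frac{1}{q},\frac{r}{s}\right)}
\end{equation*}
is simplified by collecting the four monomial contributions in $s$ and $(-r)$; the cross terms of degree $(n,m)$ and $(n{+}m{-}1,1)$ cancel because $sr\cdot s^n(-r)^{m-1}=-s^{n+1}(-r)^m$ (and symmetrically), leaving $\frac{s^{n+m}-(-r)^{n+m}}{s+r}=J_{n+m}^{\left(\frac{1}{q},\frac{r}{s}\right)}$.

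I anticipate no real obstacle: the argument is essentially a one-line substitution applied to an identity whose proof is already on the page. The only point that needs mild care is the sign of the coefficient, which must change from $-sr$ to $+sr$ under $r\mapsto -r$; everything else transforms covariantly.
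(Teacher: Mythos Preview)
Your proposal is correct and follows exactly the paper's own approach: the corollary is obtained from the preceding theorem by the formal substitution $r\mapsto -r$, which turns each $\widetilde{J}$ into the corresponding $J$ and flips the sign of the coefficient from $-sr$ to $+sr$. Your optional direct verification via the closed form is a harmless extra that the paper does not bother to include.
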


Next we prove the D'Ocagne's identity.

\begin{theorem}
(generalized D'Ocagne's identity) For any nonnegative integers $s,r$ (with $%
s>r$), $m$ and $n$ (with $n>m$), we have
\end{theorem}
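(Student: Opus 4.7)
The plan is to mimic the strategy used for the generalized Catalan and convolution identities in this appendix: work directly with the closed-form expression $\widetilde{J}_n^{\left( \frac{1}{\widetilde{q}},\frac{r}{s}\right) }=\frac{s^n-r^n}{s-r}$ from equation (\ref{eqdeofnthJacNumberDual}) and show by brute algebra that the D'Ocagne combination collapses to a multiple of a single $\widetilde{J}_{n-m}$. I anticipate the identity in the form
\begin{equation*}
\widetilde{J}_{m}^{\left( \frac{1}{\widetilde{q}},\frac{r}{s}\right) }\widetilde{J}_{n+1}^{\left( \frac{1}{\widetilde{q}},\frac{r}{s}\right) }-\widetilde{J}_{m+1}^{\left( \frac{1}{\widetilde{q}},\frac{r}{s}\right) }\widetilde{J}_{n}^{\left( \frac{1}{\widetilde{q}},\frac{r}{s}\right) }=-(sr)^{m}\,\widetilde{J}_{n-m}^{\left( \frac{1}{\widetilde{q}},\frac{r}{s}\right) },
\end{equation*}
which is the natural analogue of the classical Fibonacci D'Ocagne identity once one remembers that the characteristic roots of the defining recurrence are $s$ and $r$ and the product of these roots is $sr$ (rather than $-1$ in the Fibonacci case).

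The computation proceeds by substituting the closed form into the left-hand side to obtain
\begin{equation*}
\frac{(s^m-r^m)(s^{n+1}-r^{n+1})-(s^{m+1}-r^{m+1})(s^n-r^n)}{(s-r)^2}.
\end{equation*}
Expanding both products, the extreme terms $s^{m+n+1}$ and $r^{m+n+1}$ cancel; what survives are the four cross terms, which group as $s^m r^n(s-r)-r^m s^n(s-r)$. One factor of $(s-r)$ then cancels against one factor in the denominator, leaving $\frac{s^m r^n-r^m s^n}{s-r}$. Finally, pulling out $(sr)^m$ (which is permissible because $n>m$) yields $-(sr)^{m}\cdot\frac{s^{n-m}-r^{n-m}}{s-r}$, and the second factor is precisely $\widetilde{J}_{n-m}^{\left( \frac{1}{\widetilde{q}},\frac{r}{s}\right) }$ by definition.

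Once this identity is established for the Jacobsthal-like numbers, the corresponding statement for the more-generalized $q$-Jacobsthal numbers $J_{n}^{\left( \frac{1}{q},\frac{r}{s}\right) }$ follows immediately by the substitution $r\mapsto -r$, exactly as in the Catalan and convolution corollaries just proved; this will deliver a sign factor $(-sr)^m$ in place of $(sr)^m$ on the right-hand side and should be stated as a corollary.

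There is no genuine obstacle: every step is a routine polynomial identity, and the hypothesis $n>m$ is used only to guarantee that $\widetilde{J}_{n-m}$ is indexed by a nonnegative integer so that the previously established closed form applies directly. The single place one must be careful is bookkeeping of the exponent on $(sr)$ and the overall sign; choosing whether to factor out $(sr)^m$ or $(sr)^n$ determines whether one sees $\widetilde{J}_{n-m}$ or $\widetilde{J}_{m-n}$ with an opposite sign, and since the theorem restricts to $n>m$ the first normalization is the natural one.
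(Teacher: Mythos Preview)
Your proposal is correct and follows essentially the same route as the paper: both substitute the closed form $\widetilde{J}_n=\frac{s^n-r^n}{s-r}$ directly into the D'Ocagne combination, expand, cancel the extreme terms, factor out $(s-r)$ and then $(sr)^m$, and recognize $\widetilde{J}_{n-m}$. The only cosmetic difference is that the paper orders the left-hand side as $\widetilde{J}_n\widetilde{J}_{m+1}-\widetilde{J}_{n+1}\widetilde{J}_m$, giving $+(sr)^m\widetilde{J}_{n-m}$ on the right, whereas you wrote the negative of both sides; the corollary for $J_n^{(\frac{1}{q},\frac{r}{s})}$ via $r\mapsto -r$ is handled identically.
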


\begin{equation}
\widetilde{J}_{n}^{\left( \frac{1}{\widetilde{q}},\frac{r}{s}\right) }%
\widetilde{J}_{m+1}^{\left( \frac{1}{\widetilde{q}},\frac{r}{s}\right) }-%
\widetilde{J}_{n+1}^{\left( \frac{1}{\widetilde{q}},\frac{r}{s}\right) }%
\widetilde{J}_{m}^{\left( \frac{1}{\widetilde{q}},\frac{r}{s}\right)
}=\left( sr\right) ^{m}\widetilde{J}_{n-m}^{\left( \frac{1}{\widetilde{q}},%
\frac{r}{s}\right) }
\end{equation}%
To prove the above theorem consider%
\begin{eqnarray*}
&&\widetilde{J}_{n}^{\left( \frac{1}{\widetilde{q}},\frac{r}{s}\right) }%
\widetilde{J}_{m+1}^{\left( \frac{1}{\widetilde{q}},\frac{r}{s}\right) }-%
\widetilde{J}_{n+1}^{\left( \frac{1}{\widetilde{q}},\frac{r}{s}\right) }%
\widetilde{J}_{m}^{\left( \frac{1}{\widetilde{q}},\frac{r}{s}\right) } \\
&=&\left( \frac{s^{n}-r^{n}}{s-r}\right) \left( \frac{s^{m+1}-r^{m+1}}{s-r}%
\right) -\left( \frac{s^{n+1}-r^{n+1}}{s-r}\right) \left( \frac{s^{m}-r^{m}}{%
s-r}\right) \\
&=&\left( sr\right) ^{m}\left( \frac{s^{n-m}-r^{n-m}}{s-r}\right) =\left(
sr\right) ^{m}\widetilde{J}_{n-m}^{\left( \frac{1}{\widetilde{q}},\frac{r}{s}%
\right) }
\end{eqnarray*}%
From the above identity, we can find the generalized D'Ocagne's identity for
the $n$th more-generalized $q-$Jacobsthall numbers, $J_{n}^{\left( \frac{1}{q%
},\frac{r}{s}\right) }$, with replacing $r$ by $-r$. Thus we have the
following corollary.

\begin{corollary}
(generalized D'Ocagne's identity) For any nonnegative integers $s,r$ (with $%
s>r$), $m$ and $n$ (with $n>m$), we have%
\begin{equation}
J_{n}^{\left( \frac{1}{q},\frac{r}{s}\right) }J_{m+1}^{\left( \frac{1}{q},%
\frac{r}{s}\right) }-J_{n+1}^{\left( \frac{1}{q},\frac{r}{s}\right)
}J_{m}^{\left( \frac{1}{q},\frac{r}{s}\right) }=\left( -1\right) ^{m}\left(
sr\right) ^{m}J_{n-m}^{\left( \frac{1}{q},\frac{r}{s}\right) }
\end{equation}
\end{corollary}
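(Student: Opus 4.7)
The plan is to deduce this corollary immediately from the generalized D'Ocagne's identity for the Jacobsthal-like numbers $\widetilde{J}_{n}^{\left(\frac{1}{\widetilde{q}},\frac{r}{s}\right)}$ proved in the preceding theorem. The bridge is the corollary already recorded in the paper, which says $J_{n}^{\left(\frac{1}{q(s,r)},\frac{r}{s}\right)}=\widetilde{J}_{n}^{\left(\frac{1}{\widetilde{q}(s,-r)},\frac{(-r)}{s}\right)}$; equivalently, the closed form $J_{n}=(s^{n}-(-r)^{n})/(s+r)$ is obtained from $\widetilde{J}_{n}=(s^{n}-r^{n})/(s-r)$ by the formal substitution $r\mapsto -r$. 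Since the proof of the $\widetilde{J}$-version is a polynomial manipulation in the symbols $s$ and $r$, that substitution is admissible, and I would simply apply it termwise.

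Concretely, I would start from the identity
\[
\widetilde{J}_{n}^{\left(\frac{1}{\widetilde{q}},\frac{r}{s}\right)}\widetilde{J}_{m+1}^{\left(\frac{1}{\widetilde{q}},\frac{r}{s}\right)}-\widetilde{J}_{n+1}^{\left(\frac{1}{\widetilde{q}},\frac{r}{s}\right)}\widetilde{J}_{m}^{\left(\frac{1}{\widetilde{q}},\frac{r}{s}\right)}=(sr)^{m}\widetilde{J}_{n-m}^{\left(\frac{1}{\widetilde{q}},\frac{r}{s}\right)}
\]
and substitute $r\mapsto -r$: each $\widetilde{J}_{k}$ on the left becomes the corresponding $J_{k}^{\left(\frac{1}{q},\frac{r}{s}\right)}$ (no extra signs appear there because no lone factor of $r$ is exposed), while on the right $(sr)^{m}$ picks up a factor of $(-1)^{m}$ and $\widetilde{J}_{n-m}$ becomes $J_{n-m}^{\left(\frac{1}{q},\frac{r}{s}\right)}$. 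This directly yields the claim.

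As a sanity check I would also give a self-contained proof by plugging the closed form in equation (\ref{eqmoregejacnumbr/r+s}) into both sides. Writing $u=s$ and $v=-r$ so that $J_{k}^{\left(\frac{1}{q},\frac{r}{s}\right)}=(u^{k}-v^{k})/(u-v)$, one expands the numerator of $J_{n}J_{m+1}-J_{n+1}J_{m}$ as a single fraction over $(u-v)^{2}$ and sees that the $u^{n+m+1}$ and $v^{n+m+1}$ contributions cancel. The surviving cross terms group as $(u-v)\,u^{m}v^{m}(u^{n-m}-v^{n-m})$, so the whole expression collapses to $u^{m}v^{m}J_{n-m}^{\left(\frac{1}{q},\frac{r}{s}\right)}$. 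Reverting to $s,r$ gives $u^{m}v^{m}=s^{m}(-r)^{m}=(-1)^{m}(sr)^{m}$, matching the stated right-hand side.

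The only delicate point, and the main obstacle I would pay attention to, is the sign bookkeeping under $r\mapsto -r$: one must confirm that the $\widetilde{J}$-identity was established as an honest polynomial identity in $s$ and $r$, with no hidden positivity or parity assumption, so that the substitution is legitimate. The direct-expansion route sidesteps this concern entirely, and for that reason I would present it as the definitive argument if any doubt about the substitution remained.
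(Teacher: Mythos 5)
Your proposal is correct and follows essentially the same route as the paper: the paper likewise derives this corollary from the generalized D'Ocagne's identity for $\widetilde{J}_{n}^{\left( \frac{1}{\widetilde{q}},\frac{r}{s}\right) }$ by the substitution $r\mapsto -r$, which turns $\left( sr\right) ^{m}$ into $\left( -1\right) ^{m}\left( sr\right) ^{m}$. Your additional direct expansion via $u=s$, $v=-r$ is a sound extra verification but does not change the character of the argument.
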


\section{Expansion of $\protect\pi $\protect\bigskip\ }

If we choose $q-\pi =4-\pi $, and follow our method with the zero
approximation being the position of the larger collection of masses ($\pi $
units of mass), we obtain the following series%
\begin{eqnarray*}
\frac{\pi }{4} &=&X^{\left( \pi /4\right) }=1-\left( \frac{1}{2}\right)
+\left( \frac{1}{2}\right) ^{2}+\left( \frac{1}{2}\right) ^{3}-\left( \frac{1%
}{2}\right) ^{4}-\left( \frac{1}{2}\right) ^{5}+\left( \frac{1}{2}\right)
^{6} \\
&&-\left( \frac{1}{2}\right) ^{7}-\left( \frac{1}{2}\right) ^{8}+\left( 
\frac{1}{2}\right) ^{9}-\left( \frac{1}{2}\right) ^{10}-\left( \frac{1}{2}%
\right) ^{11}-\left( \frac{1}{2}\right) ^{12} \\
&&-\left( \frac{1}{2}\right) ^{13}+\left( \frac{1}{2}\right) ^{14}+\left( 
\frac{1}{2}\right) ^{15}+\left( \frac{1}{2}\right) ^{16}+\left( \frac{1}{2}%
\right) ^{17}+....
\end{eqnarray*}%
This series converse very fast. For example for $n$ small (say $n=17,18,19$%
), the difference between $\frac{\pi }{4}$ and the partial sums $%
X_{n}^{\left( \pi /4\right) }$is 
\begin{eqnarray*}
\frac{\pi }{4}-X_{17}^{\left( \pi /4\right) } &\simeq &5.\,\allowbreak
402\,2\times 10^{-6} \\
\frac{\pi }{4}-X_{18}^{\left( \pi /4\right) } &\simeq &1.\,\allowbreak
587\,5\times 10^{-6} \\
\frac{\pi }{4}-X_{19}^{\left( \pi /4\right) } &\simeq &-3.\,\allowbreak
198\,8\times 10^{-7}
\end{eqnarray*}%
which is quite remarkable. Thus our method yields expansion for $\pi $ in
powers of $\left( r/s\right) $ , with $r,s$, integers and $s>r>0$.

\bigskip

\bigskip


\begin{thebibliography}{9}
\bibitem{Ji Young Choi} \bigskip A Generalization of Collatz Functions and
Jacobsthal Numbers, Journal of Integer Sequences, Vol. 21 (2018), Article
18.5.4
\end{thebibliography}
\end{document}